\documentclass[reqno,12pt]{amsart}
\usepackage{amssymb,latexsym}
\usepackage{array,multirow,makecell}
\usepackage{mathtools}
\usepackage{IEEEtrantools}
\usepackage{amsmath}
\usepackage[T1]{fontenc}

\setcellgapes{1pt}
\makegapedcells
\newtheorem{theorem}{Theorem}[section]
\newtheorem{corollary}[theorem]{Corollary}
\newtheorem{lemma}[theorem]{Lemma}
\newtheorem{proposition}[theorem]{Proposition}
\newtheorem{korselt's criterion}[theorem]{Korselt's criterion}
\newtheorem{definition}[theorem]{Definition}

\numberwithin{equation}{section}
\begin{document}
\title[The $\mathbb{Q}$-Korselt Set of $\mathrm{pq}$ ]
{The $\mathbb{Q}$-Korselt Set of $\mathrm{pq}$ }

\author{Nejib Ghanmi}
\address[Ghanmi]{(1)Preparatory Institute of Engineering Studies, Tunis university, Tunisia.}
\address[]{\hspace{1.5cm}(2)   University College of Jammum, Department of Mathematics, Mekkah, Saudi Arabia.}

\email{naghanmi@uqu.edu.sa\; and\; neghanmi@yahoo.fr}

\thanks{}

\subjclass[2010]{Primary $11Y16$; Secondary $11Y11$, $11A51$.}

\keywords{Prime number, Carmichael number, Square free composite number, Korselt base, Korselt number, Korselt set}

\begin{abstract}
Let $N$ be a positive integer,  $\mathbb{A}$ be a nonempty subset of $\mathbb{Q}$ and $\alpha=\dfrac{\alpha_{1}}{\alpha_{2}}\in \mathbb{A}\setminus \{0,N\}$. $\alpha$ is called an \emph{$N$-Korselt base} (equivalently $N$ is said an \emph{$\alpha$-Korselt number})
 if  $\alpha_{2}p-\alpha_{1}$ is a divisor of $\alpha_{2}N-\alpha_{1}$ for every prime  $p$ dividing $N$. The set   of all Korselt bases of $N$ in $\mathbb{A}$  is called the $\mathbb{A}$-Korselt set of $N$ and is simply denoted by $\mathbb{A}$-$\mathcal{KS}(N)$.

Let $p$ and $q$ be two distinct prime numbers. In this paper, we study the $\mathbb{Q}$-Korselt bases of $pq$, where we give in detail how to provide $\mathbb{Q}$-$\mathcal{KS}(pq)$. Consequently, we finish the incomplete characterization of the Korselt set of $pq$ over $\mathbb{Z}$  given in  ~\cite{Ghanmi}, by supplying  the set $\mathbb{Z}$-$\mathcal{KS}(pq)$  when $q <2p$.
\end{abstract}

\maketitle

\section{Introduction}
The notion of Korselt numbers ( or $\alpha$-Korselt number   with $\alpha\in\mathbb{Z}$) was introduced by Bouall\`egue-Echi-Pinch~\cite{BouEchPin,echi} as a generalization of Carmichael numbers~\cite{Bee,Car2}. Korselt numbers are defined simply as numbers which meet a generalized  Korselt criterion as follows.
\begin{definition}~\cite{Kor} Let $\alpha\in \mathbb{Z}\setminus \{0\}$. A positive integer $N$ is said to be an \emph{$\alpha$-Korselt number} (\emph{$K_{\alpha}$-number},
for short) if $N\neq \alpha$ and $p-\alpha$  divides $N-\alpha$ for
each prime divisor $p$ of $N$.
\end{definition}

Considerable progress has been made investigating Korselt numbers last years spatially in ~\cite{BouEchPin,Ghanmi,Ghanmi2,echi}. Many properties of Carmichael numbers are extended  for Korselt numbers. However,  many related questions remain open until now, such as the infinitude of Korselt numbers,  providing a complete characterization of the Korselt set of such a number, etc.
 Recently, Ghanmi  proposed in ~\cite{Ghanmi2,Ghanmi3} another generalization of Carmichael numbers;  he  extended the notion of Korselt numbers  to $\mathbb{Q}$  by stating the  following definitions.

\begin{definition}{\cite{Ghanmi2}}\label{def1}\rm Let $N\in \mathbb{N}\setminus\{0,1\}$,   $\alpha=\dfrac{\alpha_{1}}{\alpha_{2}}\in \mathbb{Q}\setminus \{0\}$  and $\mathbb{A}$ be a subset of $\mathbb{Q}$.
Then
\begin{enumerate}
  \item $N$ is said to be an \emph{$\alpha$-Korselt number\index{Korselt number}} (\emph{$K_{\alpha}$-number}, for
short), if $N\neq \alpha$ and $\alpha_{2}p-\alpha_{1}$ divides $\alpha_{2}N-\alpha_{1}$ for
every prime divisor $p$ of $N$.

\item By the \emph{$\mathbb{A}$-Korselt set}\index{Korselt set} of the number $N$ (or the Korselt set of $N$ over \emph{$\mathbb{A}$}) , we mean the set $\mathbb{A}$-$\mathcal{KS}(N)$ of
all $\beta\in \mathbb{A}\setminus\{0,N\}$ such that $N$ is a $K_{\beta}$-number.
  \item The cardinality of $\mathbb{A}$-$\mathcal{KS}(N)$ will be called the \emph{$\mathbb{A}$-Korselt
weight}\index{Korselt weight} of $N$; we denote it by $\mathbb{A}$-$\mathcal{KW}(N)$.

\end{enumerate}
\end{definition}

It's obvious  by this definition, that for $\alpha\in \mathbb{Z}\setminus \{0\}$ (i.e $\alpha_{2}=1$) we obtain the original $\alpha$-Korselt numbers introduced by Bouall\`egue-Echi-Pinch ~\cite{BouEchPin}.

\begin{definition}{\cite{Ghanmi3}}\label{def2}\rm  Let $N\in \mathbb{N}\setminus\{0,1\}$, $\alpha\in \mathbb{Q}$ and $\mathbb{B}$ be a subset of $\mathbb{N}$. Then
\begin{enumerate}
\item  $\alpha$ is called \emph{$N$-Korselt base\index{Korselt base}}(\emph{$K_{N}$-base}, for
short), if $N$ is a \emph{$K_{\alpha}$-number}.
\item By the \emph{$\mathbb{B}$-Korselt set}\index{Korselt base set} of the base $\alpha$ (or the Korselt set of the base $\alpha$ over \emph{$\mathbb{B}$}), we mean the set $\mathbb{B}$-$\mathcal{KS}(B(\alpha))$ of
all $M\in \mathbb{B}$ such that $\alpha$ is a
$K_{M}$-base.
\item The cardinality of  $\mathbb{B}$-$\mathcal{KS}(B(\alpha))$ will be called the \emph{$\mathbb{B}$-Korselt
weight}\index{Korselt  weight} of the base $\alpha$; we denote it by $\mathbb{B}$-$\mathcal{KW}(B(\alpha))$.
\end{enumerate}
\end{definition}

The set of all $\alpha$-Korselt numbers when $\alpha$ varies in  $\mathbb{Q}$, is called  the $\mathbb{Q}$-Korselt numbers (or  rational Korselt numbers or the set of  Korselt numbers over $\mathbb{Q}$ ). The set of all $N$-Korselt bases in $\mathbb{Q}$ when $N$ varies in  $\mathbb{N}$, is called the $\mathbb{N}$-Korselt bases in $\mathbb{Q}$ (or   $\mathbb{N}$-Korselt rational bases or  the set of  Korselt rational bases  over $\mathbb{N}$).

As we know, it's  not easy in general to determine the Korselt set of a given number $N$ over $\mathbb{Z}$;  even for the simple case when $N=pq$ with $p$ and $q$ are two distinct prime numbers. This difficulty is mentioned  in ~\cite{Ghanmi}  by Ghanmi and Echi, where they  characterized the $\mathbb{Z}$-Korselt set of $pq$ and left the case when $q<2p$ without solution. Fortunately, this problem will be solved in our present work; in fact, the set $\mathbb{Z}$-$\mathcal{KS}(pq)$ when $q<2p$, will be completely determined   after  studying  the Korselt set of $pq$ over $\mathbb{Q}$.

In this paper, and for  given distinct prime numbers $p$ and $q$, we will discuss in Section $2$, the belonging sets  of  $\alpha_{1}$  and $\alpha_{2}$ in $\mathbb{Z}$ for which $\alpha=\dfrac{\alpha_{1}}{\alpha_{2}}$ is a Korselt rational base of $pq$. In Section $3$ and by some results given in Section $2$, we will characterize completely the Korselt rational set of $pq$. Furthermore, this allowed us to deduce immediately the $\mathbb{Z}$-Korselt set of $pq$ when $q<2p$.

For all the following let $\alpha=\dfrac{\alpha_{1}}{\alpha_{2}}\in\mathbb{Q}$, $p<q$ be  two primes, $N=pq$ and $i,j,s,t$ be the integers given by the Euclidean divisions of
$q$ and $\alpha_{1}$ by $p$ : $q=ip+s$ and $\alpha_{1}=jp+t$ with $s \in \{1, \ldots, p-1\}$, $t \in \{0,1, \ldots, p-1\}$.

For $\alpha=\dfrac{\alpha_{1}}{\alpha_{2}}\in\mathbb{Q}$, we will suppose without loss of generality that $\alpha_{2}\in \mathbb{N}\setminus\{0\}$, $\alpha_{1}\in \mathbb{Z}$ and $\gcd(\alpha_{1},\alpha_{2})=1$.

As the case of $\alpha\in\mathbb{Z}$ (i.e. $\alpha_{2}=1$) is discussed in~\cite{Ghanmi}, our attention will center  in all this paper, only on the case when $\alpha_{2}\geq2$.

\section{Properties of  $pq$-Korselt rational bases }

\begin{proposition}{\cite{Ghanmi2}}\label{encadr1}
  Let $\alpha=\dfrac{\alpha_{1}}{\alpha_{2}}\in\mathbb{Q}$   such that $\gcd(\alpha_{1},N)=1$.
   If $\alpha$ is a $K_{N}$-base, then the following inequalities hold.
    \begin{equation*} q-p+1\leq \alpha \leq q+p-1.\end{equation*}
\end{proposition}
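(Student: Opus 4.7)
The plan is to exploit the Korselt divisibility condition at the prime $q$ (the larger of the two primes dividing $N=pq$), from which the bound on $\alpha$ will follow almost at once.

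First, I would write out what it means for $\alpha=\alpha_1/\alpha_2$ to be a $K_N$-base at the prime $q$: namely, $\alpha_2q-\alpha_1$ divides $\alpha_2N-\alpha_1=\alpha_2pq-\alpha_1$. Multiplying $\alpha_2q-\alpha_1$ by $p$ and subtracting gives
\[
\alpha_2pq-\alpha_1 \;=\; p(\alpha_2q-\alpha_1)+(p-1)\alpha_1,
\]
so the Korselt condition is equivalent to $\alpha_2q-\alpha_1 \mid (p-1)\alpha_1$.

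Next I would remove the factor $\alpha_1$ using coprimality. Since $\gcd(\alpha_1,\alpha_2)=1$ and $\gcd(\alpha_1,N)=\gcd(\alpha_1,pq)=1$, in particular $\gcd(\alpha_1,q)=1$, so
\[
\gcd(\alpha_2q-\alpha_1,\alpha_1)=\gcd(\alpha_2q,\alpha_1)=1.
\]
Thus $\alpha_2q-\alpha_1$ must divide $p-1$.

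Before taking absolute values I would quickly dispose of the degenerate case $\alpha_2q-\alpha_1=0$: this would force $\alpha_2N-\alpha_1=0$ as well, and subtracting would give $\alpha_2q(p-1)=0$, impossible. Hence $\alpha_2q-\alpha_1$ is a nonzero divisor of $p-1$, so $|\alpha_2q-\alpha_1|\le p-1$. Dividing by $\alpha_2\ge 1$ yields
\[
q-\frac{p-1}{\alpha_2}\;\le\;\alpha\;\le\;q+\frac{p-1}{\alpha_2},
\]
and since $(p-1)/\alpha_2\le p-1$, the claimed inequalities $q-p+1\le\alpha\le q+p-1$ follow.

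I do not expect any serious obstacle: the whole argument rests on the single observation that the Korselt relation at $q$ forces $\alpha_2q-\alpha_1$ to divide $p-1$ after the coprimality hypothesis is used. The only point that needs care is confirming that $\alpha_2q-\alpha_1\ne 0$, which is immediate from $\gcd(\alpha_1,N)=1$, and that using only the condition at $q$ (not at $p$) is sufficient — indeed it is, because $p-1<q-1$ gives the tighter bound.
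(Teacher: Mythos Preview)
Your argument is correct. The paper does not actually supply a proof of this proposition --- it is quoted from \cite{Ghanmi2} without argument --- so there is no in-paper proof to compare against. That said, the reduction you use (from $\alpha_2 q-\alpha_1\mid \alpha_2 N-\alpha_1$ to $\alpha_2 q-\alpha_1\mid p-1$ via the coprimality hypotheses) is precisely the step the paper carries out a few lines later when passing from system $(S_1)$ to $(S_2)$ in the proof of Proposition~\ref{encadr3}, so your approach is fully in line with the paper's methods. One small remark: your handling of the degenerate case $\alpha_2 q-\alpha_1=0$ is fine, but it is even quicker to note that this equality would force $q\mid\alpha_1$ (since $\gcd(\alpha_1,\alpha_2)=1$ gives $\alpha_2=1$ and $\alpha_1=q$), contradicting $\gcd(\alpha_1,N)=1$ directly.
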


Now, we give informations about the Korselt set of $pq$ over $\mathbb{Q}$ when $\gcd(\alpha_{1},N)\neq 1$.
\begin{proposition}{\cite[Proposition $2.4$]{Ghanmi2}}\label{kornum}
  Suppose that $N$ is a $K_{\alpha}$-number with $\gcd(\alpha_{1},N)\neq 1$. Then the following assertions hold.
 \begin{enumerate}
   \item [1)]If $\alpha \in \mathbb{Z}$ $($ i.e. $\alpha_{2}=1; \alpha=\alpha_{1})$, then  $q \nmid \alpha, \ \ p\mid\alpha$ and
  $$\alpha\in\left\{\left\lfloor\dfrac{q}{p}\right\rfloor p , \left\lceil\dfrac{q}{p}\right\rceil p \right\}.$$

  \item [2)] If $\alpha \in \mathbb{Q}\setminus \mathbb{Z}$, then $\dfrac{q}{p}\leq \alpha \leq q+p-1$.

  \end{enumerate}
    \end{proposition}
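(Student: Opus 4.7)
My plan is to unpack the Korselt criterion at the two prime divisors $p,q$ of $N$ and exploit the Bezout-style identity
\[
\alpha_{2}N-\alpha_{1}=q(\alpha_{2}p-\alpha_{1})+(q-1)\alpha_{1}=p(\alpha_{2}q-\alpha_{1})+(p-1)\alpha_{1}.
\]
This yields the two cleaner divisibilities $(\alpha_{2}p-\alpha_{1})\mid(q-1)\alpha_{1}$ and $(\alpha_{2}q-\alpha_{1})\mid(p-1)\alpha_{1}$. The hypothesis $\gcd(\alpha_{1},N)\neq 1$ forces $p\mid\alpha_{1}$ or $q\mid\alpha_{1}$, and the two cases will be treated by using the coprimality of $\alpha_{1}$ and $\alpha_{2}$ to strip as much of $\alpha_{1}$ as possible from the right-hand sides.

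For part (1), $\alpha_{2}=1$ and $\alpha\in\mathbb{Z}$. First I would rule out $q\mid\alpha$: writing $\alpha=mq$, the relation $p-\alpha\mid pq-\alpha=q(p-m)$ combined with $\gcd(p-mq,q)=1$ gives $p-mq\mid p-m$; a size comparison eliminates every $m$ except $0,1,p$, which correspond to $\alpha=0,q,N$ (all excluded, with $\alpha=q$ ruled out because it makes $q-\alpha=0$). Hence $p\mid\alpha$, say $\alpha=pk$. Then the condition at $q$ gives $q-pk\mid(p-1)k$, and multiplying by $p$ modulo $q-pk$ (using $pk\equiv q$) further gives $(q-pk)\mid(p-1)q$; ruling out $q\mid k$ (else $\alpha$ is a nonzero nontrivial multiple of $N$, excluded by the same sort of size argument) reduces this to $(q-pk)\mid(p-1)$. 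So $|q-pk|\leq p-1$, and substituting $q=ip+s$ with $1\leq s\leq p-1$ pins $k$ to $\{i,i+1\}$.

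For part (2), $\alpha_{2}\geq 2$. In the case $p\mid\alpha_{1}$, writing $\alpha_{1}=p\beta_{1}$ with $\gcd(\alpha_{2},p\beta_{1})=1$, the divisibility $(\alpha_{2}q-\alpha_{1})\mid(p-1)\alpha_{1}$ collapses to $(\alpha_{2}q-p\beta_{1})\mid(p-1)$ once the coprime factors are peeled off (via $\gcd(\alpha_{2}q-p\beta_{1},p)=\gcd(\alpha_{2},p)=1$ and similarly for $\beta_{1}$, in the generic subcase $q\nmid\beta_{1}$). Rewriting as $\alpha_{2}(q-\alpha)$ gives $|q-\alpha|\leq(p-1)/\alpha_{2}\leq(p-1)/2$, which fits inside $[q/p,q+p-1]$ since $p\geq 2$. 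The symmetric case $q\mid\alpha_{1}$, written $\alpha_{1}=q\gamma_{1}$, gives the pair $(\alpha_{2}-\gamma_{1})\mid(p-1)$ and $(\alpha_{2}p-q\gamma_{1})\mid(q-1)$, which, combined via $(q-p)\gamma_{1}=p(\alpha_{2}-\gamma_{1})-(\alpha_{2}p-q\gamma_{1})$, bound both $\gamma_{1}$ and $\alpha_{2}$ and place $\alpha=q\gamma_{1}/\alpha_{2}$ in a narrow band around $q$, still inside the stated range. The degenerate subcase $N\mid\alpha_{1}$ is handled by an even more restrictive version of the same argument.

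The main obstacle I anticipate is the subcase $q\mid\alpha_{1}$ of part (2): a single divisibility does not by itself confine $\alpha$ to $[q/p,q+p-1]$ (for instance the naive bound $|p-\alpha|\leq(q-1)/\alpha_{2}$ would even permit negative $\alpha$ when $q\gg p$), so one must use both Korselt divisibilities simultaneously and track carefully the coprimality relations inherited from $\gcd(\alpha_{1},\alpha_{2})=1$. A secondary care point is the handful of boundary configurations ($\alpha_{1}=0$, $\alpha\in\{p,q,N\}$) at which some factor $\alpha_{2}p-\alpha_{1}$ or $\alpha_{2}q-\alpha_{1}$ vanishes and which must be excluded at each stage.
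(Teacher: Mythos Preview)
The paper does not prove this proposition: it is quoted verbatim from \cite[Proposition~2.4]{Ghanmi2} and used as a black box, so there is no ``paper's own proof'' to compare against. I can therefore only assess your argument on its merits.

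Your reduction $(\alpha_{2}p-\alpha_{1})\mid(q-1)\alpha_{1}$ and $(\alpha_{2}q-\alpha_{1})\mid(p-1)\alpha_{1}$ is correct and is exactly the right starting point. Part~(1) is essentially complete; note only that once you have shown $q\nmid\alpha$, the later worry ``ruling out $q\mid k$'' is automatic, since $\alpha=pk$ and $q\nmid\alpha$ already give $q\nmid k$. The subcase $p\mid\alpha_{1}$, $q\nmid\alpha_{1}$ of Part~(2) is also fine: your bound $|q-\alpha|\leq(p-1)/\alpha_{2}\leq(p-1)/2$ indeed lands inside $[q/p,\,q+p-1]$.

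The genuine gap is the subcase $q\mid\alpha_{1}$, $p\nmid\alpha_{1}$ of Part~(2), which you yourself flag. Your two cleaned divisibilities $(\alpha_{2}-\gamma_{1})\mid(p-1)$ and $(\alpha_{2}p-q\gamma_{1})\mid(q-1)$ are correct, but the sentence ``place $\alpha$ in a narrow band around $q$'' is not yet a proof. Here is how to close it. First, $\gamma_{1}\leq 0$ is impossible since then $\alpha_{2}p-q\gamma_{1}\geq 2p+q>q-1$; and $\gamma_{1}\geq\alpha_{2}$ is impossible since then $q\gamma_{1}-\alpha_{2}p\geq q\alpha_{2}-p\alpha_{2}+ q>q-1$ (using $\gamma_1\ge\alpha_2+1$ after noting $\gamma_1\ne\alpha_2$). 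Hence $1\leq\gamma_{1}<\alpha_{2}$, which already gives $\alpha=q\gamma_{1}/\alpha_{2}<q<q+p-1$. For the lower bound, $0<\alpha_{2}-\gamma_{1}\leq p-1$ yields $\alpha_{2}\leq\gamma_{1}+p-1\leq p\gamma_{1}$ (the last inequality is just $\gamma_{1}\geq 1$), whence $\alpha=q\gamma_{1}/\alpha_{2}\geq q/p$. The residual case $pq\mid\alpha_{1}$ is handled by the analogous pair $(\alpha_{2}-q\delta)\mid(q-1)$ and $(\alpha_{2}-p\delta)\mid(p-1)$, from which $\delta\geq 1$ and $\alpha_{2}\leq p\delta+p-1\leq p^{2}\delta$ give $\alpha\geq q/p$, while $\alpha_{2}\geq p\delta-(p-1)$ together with $q\delta-\alpha_{2}\leq q-1$ bounds $\alpha$ above. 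With these details your outline becomes a complete proof.
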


\begin{proposition}\label{encadr3}

 Let $\alpha=\dfrac{\alpha_{1}}{\alpha_{2}}\in\mathbb{Q}$ be such that $\gcd(\alpha_{1},N)=1$.

  If $\alpha$ is a $K_{N}$-base, then the following properties hold.

   \begin{enumerate}
       \item If $q<2p$, then $1\leq\alpha_{2}\leq p$.
       \item If $2p<q<3p$, then $1\leq\alpha_{2}<3$.
       \item If $3p<q<4p$, then $1\leq\alpha_{2}<2$.
       \item If $q>4p$, then  $\alpha_{2}=1$  $($i.e. $\alpha\in\mathbb{Z})$.
   \end{enumerate}

\end{proposition}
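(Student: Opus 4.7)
The plan is to extract two compact divisibility relations from the $K_N$-base condition at the two prime divisors of $N=pq$, and then combine them into a single upper bound on $\alpha_2$ that shrinks as $q$ grows relative to $p$. First I would rewrite
\[\alpha_2 N-\alpha_1 \;=\; q(\alpha_2 p-\alpha_1)+(q-1)\alpha_1 \;=\; p(\alpha_2 q-\alpha_1)+(p-1)\alpha_1.\]
Since $\alpha_2 p-\alpha_1$ and $\alpha_2 q-\alpha_1$ both divide $\alpha_2 N-\alpha_1$ by the Korselt hypothesis, and since $\gcd(\alpha_1,\alpha_2)=1$ together with $\gcd(\alpha_1,N)=1$ gives $\gcd(\alpha_1,\alpha_2 p-\alpha_1)=\gcd(\alpha_1,\alpha_2 q-\alpha_1)=1$, the factors of $\alpha_1$ in the two identities cancel and I obtain the clean divisibilities
\[\alpha_2 p-\alpha_1 \mid q-1 \qquad\text{and}\qquad \alpha_2 q-\alpha_1 \mid p-1.\]
Neither linear form can vanish once $\alpha_2\geq 2$, since $\alpha_1=\alpha_2 p$ or $\alpha_1=\alpha_2 q$ with $\gcd(\alpha_1,\alpha_2)=1$ would force $\alpha_2=1$. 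So I get the size inequalities $|\alpha_2 p-\alpha_1|\leq q-1$ and $|\alpha_2 q-\alpha_1|\leq p-1$.

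From the identity $\alpha_2(q-p)=(\alpha_2 q-\alpha_1)-(\alpha_2 p-\alpha_1)$ and the triangle inequality I deduce the master bound
\[\alpha_2\;\leq\;\frac{p+q-2}{q-p}.\]
Assertions (2)--(4) now follow from elementary estimates of the right-hand side: $q-p>3p$ gives a value $<5/3$, $q-p>2p$ gives a value $<2$, and $q-p>p$ gives $3-2/p<3$. For assertion (1), a short manipulation shows that $(p+q-2)/(q-p)\leq p$ is equivalent to $q\geq p+2$, so the desired bound $\alpha_2\leq p$ is immediate whenever $q\geq p+2$.

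The main obstacle will be the unique prime pair with $q<p+2$, namely $(p,q)=(2,3)$, where the master bound only yields $\alpha_2\leq 3$. To finish I would rule out $\alpha_2=3$ directly: the equation $\alpha_2(q-p)=3$ combined with $\alpha_2 p-\alpha_1\mid 2$ and $\alpha_2 q-\alpha_1\mid 1$ leaves only $(\alpha_2 p-\alpha_1,\alpha_2 q-\alpha_1)=(-2,1)$, forcing $\alpha_1=8$; but $\gcd(8,6)=2$ contradicts $\gcd(\alpha_1,N)=1$. The case $\alpha_2=1$ satisfies every claimed inequality trivially, and this completes assertion (1).
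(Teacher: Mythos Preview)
Your proof is correct and follows essentially the same route as the paper's: both derive the clean divisibilities $\alpha_2 p-\alpha_1\mid q-1$ and $\alpha_2 q-\alpha_1\mid p-1$ from the coprimality hypotheses, combine them into the master bound $\alpha_2(q-p)\le p+q-2$, and then read off the four cases, handling the exceptional pair $(p,q)=(2,3)$ separately. The only cosmetic differences are that you phrase the combination via absolute values and the triangle inequality where the paper uses the two one-sided inequalities directly, and you reach the divisibilities through the identity $\alpha_2 N-\alpha_1=q(\alpha_2 p-\alpha_1)+(q-1)\alpha_1$ rather than via the intermediate relation $\alpha_2 p-\alpha_1\mid p(q-1)$.
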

\begin{proof}
By definition, $\alpha$ is a $K_{N}$-base if and only if

$$(S_{1})\hspace{1cm} \left\{\begin{array}{rrl}
   \alpha_{2}p-\alpha_{1}  & \mid & p(q-1) \\
   \alpha_{2}q-\alpha_{1}  & \mid & q(p-1)\\
   \end{array}
  \right. $$

 Since $\gcd(\alpha_{1},p)=\gcd(\alpha_{1},q)=1$ and $q=ip+s$, $(S_{1})$ is equivalent to
  $$(S_{2})\label{sy2}\hspace{1cm} \left\{\begin{array}{lrl}
   \alpha_{2}p-\alpha_{1}  & \mid & q-1=ip+s-1 \\
   \alpha_{2}(ip+s)-\alpha_{1}  & \mid & p-1\\
   \end{array}
  \right. $$

  This implies that
    \begin{equation}\label{eq1}
   -\alpha_{2}p+\alpha_{1}\leq ip+s-1
    \end{equation}

  and
  \begin{equation}\label{eq2}
  \alpha_{2}(ip+s)-\alpha_{1}\leq p-1.
   \end{equation}

  The sum of $\eqref{eq1}$ and $\eqref{eq2}$ gives
  \begin{equation}\label{eq3}
  \alpha_{2}((i-1)p+s)\leq (i+1)p+s-2,
  \end{equation}

  as $\alpha_{2}\geq 1$, it follows that
  \begin{equation}\label{eq4}
  \alpha_{2}(i-1)p\leq (i+1)p+s(1-\alpha_{2})-2<(i+1)p.
  \end{equation}
Two cases are to be discussed.
  \begin{enumerate}

       \item[a)]  If $q<2p$ hence  $q=p+s$ with $s\geq2$, (since $p = 2$; $q = 3$; $s = 1$ and $\alpha_{2} = 3$ would imply that $2\mid\alpha_{1}$), then  by $\eqref{eq3}$ we get \begin{equation*}\label{5}
       1\leq\alpha_{2}\leq \dfrac{2p-2}{s}+1\leq p.\end{equation*}
       \item[b)] Suppose that $i\geq2$. Then by  $\eqref{eq4}$, we obtain $1\leq\alpha_{2}< \dfrac{i+1}{i-1}$. Hence, the following subcases hold.
       \begin{itemize}
        \item If $2p<q<3p$ (i.e. $i=2$), then $\alpha_{2}< 3$.
        \item If $3p<q<4p$ (i.e. $i=3$),  then $\alpha_{2}< 2$.
        \item If $q>4p$ (i.e. $i\geq4$), then $\alpha_{2}=1$.
       \end{itemize}

   \end{enumerate}

  \end{proof}

  Each case in Proposition~\ref{encadr3} will be discussed separately in the  following two results.
  \begin{proposition}\label{encadr4}

Let $\alpha=\dfrac{\alpha_{1}}{\alpha_{2}}\in\mathbb{Q}$ be such that   $\gcd(\alpha_{1},N)=1$.
  If $\alpha$ is a $K_{N}$-base with  $q>2p$, then $\alpha\in\mathbb{Z}$.

\end{proposition}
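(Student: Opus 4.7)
The plan is to assume that $\alpha=\alpha_{1}/\alpha_{2}$ is a $K_{N}$-base with $q>2p$ and $\gcd(\alpha_{1},N)=1$, and to deduce $\alpha_{2}=1$. By Proposition \ref{encadr3}, the hypothesis $q>2p$ already forces $\alpha_{2}=1$ unless $2p<q<3p$, in which case only $\alpha_{2}\in\{1,2\}$ is allowed (the boundary values $q=2p,3p,4p$ being excluded because $q$ is prime and $p<q$). So the entire task reduces to ruling out $\alpha_{2}=2$ when $2p<q<3p$.

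Suppose for contradiction that $\alpha_{2}=2$, and write $q=2p+s$ with $1\leq s\leq p-1$; then $\alpha_{1}$ must be odd, since $\gcd(\alpha_{1},\alpha_{2})=1$. The system $(S_{2})$ extracted in the proof of Proposition \ref{encadr3} reads
\[
2p-\alpha_{1}\mid q-1 \quad\text{and}\quad 2q-\alpha_{1}\mid p-1.
\]
From the second divisibility I would extract $|2q-\alpha_{1}|\leq p-1$, which combined with the Korselt range $2q-2p+2\leq \alpha_{1}\leq 2q+2p-2$ of Proposition \ref{encadr1} confines $\alpha_{1}$ to the interval $[3p+2s+1,\,5p+2s-1]$. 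In particular $\alpha_{1}>2p$, so $\alpha_{1}-2p$ is a positive integer in $[p+2s+1,\,2p+s-1]$ dividing $q-1=2p+s-1$. Any proper divisor of $2p+s-1$ is at most $(2p+s-1)/2$, and a short calculation shows $(2p+s-1)/2<p+2s+1$; hence the only surviving option is $\alpha_{1}-2p=2p+s-1$, giving $\alpha_{1}=4p+s-1$.

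The coup de grâce is a parity check. If $p$ is an odd prime, then $q=2p+s>2$ is odd, which forces $s$ odd, whence $\alpha_{1}=4p+s-1$ is even, contradicting that $\alpha_{1}$ must be odd. The residual case $p=2$ forces $q=5$, $s=1$, $\alpha_{1}=8$, again even. Hence $\alpha_{2}=2$ is impossible, so $\alpha_{2}=1$ and $\alpha\in\mathbb{Z}$.

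The main obstacle is the middle paragraph: one has to pair both divisibilities of $(S_{2})$ with the rational Korselt bound of Proposition \ref{encadr1} tightly enough to leave only one candidate value of $\alpha_{1}$. Once $\alpha_{1}=4p+s-1$ is forced, the parity of $s$ (dictated by $q$ being an odd prime) finishes the argument almost for free.
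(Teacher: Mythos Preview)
Your argument is correct and lands on the same terminal step as the paper (forcing $\alpha_{1}=4p+s-1$ and then invoking the parity of $s$), but the path is more direct. The paper writes $\alpha_{1}=jp+t$, extracts from $(S_{2})$ the two-sided bound $2\alpha_{2}-1\le j\le \alpha_{2}+2$, so that $j\in\{3,4\}$ when $\alpha_{2}=2$, and then treats $j=3$ and $j=4$ separately; the $j=4$ case is precisely your parity argument, while $j=3$ needs an additional computation. Your use of $|2q-\alpha_{1}|\le p-1$ together with the first divisibility collapses both cases at once, which is a genuine economy. (Incidentally, the appeal to Proposition~\ref{encadr1} is superfluous: the inequality $|2q-\alpha_{1}|\le p-1$ already gives the tighter interval $[3p+2s+1,\,5p+2s-1]$ you actually use.)

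One small slip worth noting: the interval $[p+2s+1,\,2p+s-1]$ for $\alpha_{1}-2p$ is nonempty only when $s\le p-2$. If $s=p-1$ (equivalently $q=3p-1$; in particular your residual case $p=2$, $q=5$) the interval is empty and the contradiction is immediate, so the value $\alpha_{1}=4p+s-1=8$ you write down for $p=2$ never actually arises from your own bounds. This does not affect validity---the boundary case is simply dispatched one line earlier than you indicate.
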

\begin{proof}

 Let $\alpha=\dfrac{\alpha_{1}}{\alpha_{2}}\in \mathbb{Q}$-$\mathcal{KS}(N)$. First, let us note that  if $i\geq3$, then by Proposition~\ref{encadr3}  we have $\alpha\in\mathbb{Z}$. Hence, we may assume that  $i=2$.

  Substituting $\alpha_{1}=jp+t$ in  $(S_{2})$, we get

$$(S_{3})\hspace{1cm} \left\{\begin{array}{lrl}
   (\alpha_{2}-j)p-t& \mid & ip+s-1 \\
    (\alpha_{2}i-j)p+\alpha_{2}s-t & \mid & p-1 \\
   \end{array}
  \right. $$

which implies that

$$ \left\{\begin{array}{lrl}
   (j-\alpha_{2}-i)p  & \leq& s-t-1 \\
    \alpha_{2}s-t+1&\leq &(j-\alpha_{2}i+1)p \\
   \end{array}
  \right. $$
As $s-t-1\leq p-3$ and $-p+\alpha_{2}+2\leq \alpha_{2}s-t+1$, it follows that

  $$ \left\{\begin{array}{lrl}
   (j-\alpha_{2}-i)p  & \leq& p-3 \\
    -p+\alpha_{2}+2&\leq &(j-\alpha_{2}i+1)p \\
   \end{array}
  \right. $$

 hence
 $$ (S_{4})\hspace{1cm}\left\{\begin{array}{lrlll}
   j  & \leq&\alpha_{2}+i \\
    \alpha_{2}i-1&\leq &j \\
   \end{array}
  \right. $$
Now, let us show that for $i=2$, we have  $\alpha_{2}=1$ (i.e. $\alpha\in\mathbb{Z}$). If $i=2$, then  $\alpha_{2}< 3$ by Proposition~\ref{encadr3}.   Suppose that $\alpha_{2}=2$. Then by  $(S_{4})$, $j=3$ or $j=4$.

  \begin{itemize}
    \item If $j=3$, then $(S_{3})$ gives
    \begin{IEEEeqnarray}{rlCl}
&  p+t  & \mid & 2p+s-1 \label{eq8}
\\*[-0.625\normalbaselineskip]
\smash{\left\{
\IEEEstrut[5\jot]
\right.} \nonumber
\\*[-0.625\normalbaselineskip]
& p+2s-t  & \mid & p-1 \label{eq9}
\end{IEEEeqnarray}

  By $\eqref{eq8}$ we must have $2p+s-1=2(p+t)$, hence $s=2t+1$. Therefore $p+2s-t=p+3t+2>p-1$, which contradicts $\eqref{eq9}$.
    \item If $j=4$, then $(S_{3})$  becomes
    \begin{IEEEeqnarray}{rlCl}
& 2p+t  & \mid & 2p+s-1 \label{eq10}
\\*[-0.625\normalbaselineskip]
\smash{\left\{
\IEEEstrut[5\jot]
\right.} \nonumber
\\*[-0.625\normalbaselineskip]
& 2s-t  & \mid & p-1
\end{IEEEeqnarray}

  By $\eqref{eq10}$ we must have $2p+t = 2p+s-1 $, so that $t=s-1$. As $q=2p+s$, then $s$ is odd and so  $\alpha_{1}=jp+t=4p+s-1$ is even. But, since $\alpha_{2}=2$, we get a contradiction with the fact that $\gcd(\alpha_{1},\alpha_{2})=1$.
  \end{itemize}
Finally,  we conclude that $\alpha_{2}\neq2$. So, $\alpha_{2}=1$.

\end{proof}

\begin{proposition}\label{encadr5}

 Let  $\alpha=\dfrac{\alpha_{1}}{\alpha_{2}}\in\mathbb{Q}$  be such that $\gcd(\alpha_{1},N)=1$.
If $\alpha$ is a $K_{N}$-base with $q<2p$, then $\alpha_{2}\in\{j-1,j,j+1\}$.

\end{proposition}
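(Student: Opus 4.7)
The plan is to specialize to $q<2p$ (so $i=1$) and then recycle the chain of inequalities already obtained in the proof of Proposition~\ref{encadr4}, which led from the divisibility system $(S_{3})$ to the two-sided bound $(S_{4})$: $\alpha_{2}i-1\leq j\leq \alpha_{2}+i$. Observe that the derivation of $(S_{4})$ from $(S_{3})$ nowhere uses the hypothesis $i\geq 2$; it uses only that $s\in\{1,\dots,p-1\}$, that $t\in\{1,\dots,p-1\}$ (which holds because $\gcd(\alpha_{1},p)=1$ forces $t\neq 0$), and that $\alpha_{2}\geq 1$. So the same derivation is available in our range.

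Concretely, I would proceed as follows. First, write $i=1$ so that $q=p+s$, and substitute $\alpha_{1}=jp+t$ into $(S_{2})$ to obtain the specialization of $(S_{3})$, namely $(\alpha_{2}-j)p-t\mid p+s-1$ and $(\alpha_{2}-j)p+\alpha_{2}s-t\mid p-1$. Next, converting each divisibility into a signed inequality (one choice of sign for each), one gets $(j-\alpha_{2}-1)p\leq s-t-1$ and $\alpha_{2}s-t+1\leq (j-\alpha_{2}+1)p$. Using the bounds $s-t-1\leq p-3$ and $-p+\alpha_{2}+2\leq \alpha_{2}s-t+1$ exactly as in the proof of Proposition~\ref{encadr4}, the first inequality forces $j-\alpha_{2}-1\leq 0$ and the second forces $j-\alpha_{2}+1\geq 0$. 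Hence
\begin{equation*}
\alpha_{2}-1\leq j\leq \alpha_{2}+1,
\end{equation*}
which rewrites as $\alpha_{2}\in\{j-1,j,j+1\}$, as required.

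I do not expect a serious obstacle here: the real work was done once and for all when $(S_{4})$ was established, and the present statement is the instantiation at $i=1$. The only point to double-check is the boundary conditions when $p$ or $s$ is small (in particular the degenerate $p=2$, $q=3$, $s=1$ situation that Proposition~\ref{encadr3} already had to single out), where one must verify that $t\geq 1$ and $s\geq 1$ are not violated. Since $\gcd(\alpha_{1},N)=1$ implies $p\nmid\alpha_{1}$, the bound $t\geq 1$ is automatic, and the argument goes through uniformly.
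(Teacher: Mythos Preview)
Your argument is correct and lands on the same two-sided bound $\alpha_{2}-1\le j\le\alpha_{2}+1$ that the paper obtains. The route differs slightly in packaging: the paper works directly from the $i=1$ system $(S_{5})$, writes out all four signed inequalities coming from the two divisibilities, and then selects the pair (lower bound from the first, upper bound from the second) that squeezes $(\alpha_{2}-j)p$ between $-2p+3$ and $2p-3$. You instead notice that the derivation of $(S_{4})$ in Proposition~\ref{encadr4} never actually used $i\ge 2$, so the bound $\alpha_{2}i-1\le j\le\alpha_{2}+i$ is available for every $i$ and specializing to $i=1$ finishes immediately. The inequalities you invoke are exactly the two the paper ends up using, so the mathematical content is identical; your version simply avoids re-deriving what was already proved and is a bit more economical.
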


\begin{proof}

If $\alpha$ is a $K_{N}$-base  such that $\gcd(\alpha_{1},p)=\gcd(\alpha_{1},q)=1$, $q=p+s$ and $\alpha_{1}=jp+t$, then by $(S_{2})$ we get

   $$(S_{5})\hspace{1cm} \left\{\begin{array}{lrl}
   (\alpha_{2}-j)p-t  & \mid & p+s-1 \\
  (\alpha_{2}-j)p+\alpha_{2}s-t  & \mid & p-1\\
   \end{array}
  \right. $$

  Hence
$$ \left\{\begin{array}{lrlll}
  -p-s+1 &\leq &(\alpha_{2}-j)p-t  & \leq& p+s-1 \\
   -p+1&\leq & (\alpha_{2}-j)p+\alpha_{2}s-t&\leq &p-1 \\
   \end{array}
  \right. $$

   so that
$$ \left\{\begin{array}{lrlll}
  -p-s+t+1 &\leq&(\alpha_{2}-j)p  & \leq& p+s+t-1 \\
   -p-\alpha_{2}s+t+1&\leq& (\alpha_{2}-j)p&\leq &p-\alpha_{2}s+t-1 \\
   \end{array}
  \right. $$

 This implies that

 \begin{equation*}\label{eq16}
 -p-s+t+1 \leq(\alpha_{2}-j)p  \leq p-\alpha_{2}s+t-1.\end{equation*}

 Since, in addition  \begin{equation*}\label{eq17}
 -2p+3=-p-(p-1)+1+1 \leq-p-s+t+1\end{equation*} and

\begin{equation*}\label{eq18}
p-\alpha_{2}s+t-1\leq p-1+p-1-1=2p-3,\end{equation*} it follows that

$$ \left\{\begin{array}{rll}
-2p <-2p+3 &\leq&(\alpha_{2}-j)p  \\
   (\alpha_{2}-j)p&\leq &2p-3<2p \\
   \end{array}
  \right. $$

  hence \begin{equation*}\label{eq19}
  -2 <\alpha_{2}-j<2.\end{equation*} So, we deduce that  $\alpha_{2}\in\{j-1,j,j+1\}$.

\end{proof}
The previous proposition leads us to discuss separately in the  next three lemmas, each case of  $\alpha_{2}\in\{j-1,j,j+1\}$  in order to fully determine the $K_{N}$-base $\alpha$.
For the rest of this paper, let us define for an integer $m$ the set $ Div(m-1)=\{d_{m}\in\mathbb{N}^{*}; \, d_{m} \mid (m-1) \}$.

\begin{lemma}\label{encadr6}

 Let $\alpha=\dfrac{\alpha_{1}}{\alpha_{2}}\in\mathbb{Q}$ be such that $\gcd(\alpha_{1},N)=1$.
 Suppose that $q<2p$, $j\geq2$  and $\alpha_{1}=(\alpha_{2}+1)p+t$ (i.e. $\alpha_{2}=j-1$). Then the following assertions hold.

 \begin{enumerate}
   \item  If $\alpha$ is a $K_{N}$-base then $\alpha_{2}$ is odd and  $1\leq\alpha_{2}\leq p$.
   \item $\alpha$ is a $K_{N}$-base if and only if there exist  $\varepsilon \in \{-1,1\}$ and an even integer $ d_{p}\in Div(p-1)$ such that $t=s-1$ and $\alpha_{2}=\dfrac{q-1-\varepsilon d_{p}}{s}$.
 \end{enumerate}

\end{lemma}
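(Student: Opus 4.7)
The plan is to specialize the system $(S_5)$ from the proof of Proposition~\ref{encadr5} to $\alpha_{2} = j - 1$, which turns the two divisibility constraints into
\[
p + t \mid p + s - 1 \qquad \text{and} \qquad (\alpha_{2} - 1)s - (p-1) \mid p - 1.
\]
From the first, since $0 \leq t \leq p - 1$ and $1 \leq s \leq p - 1$, both $p + t$ and $p + s - 1$ lie in a range too narrow to allow a quotient greater than one, so the divisibility collapses to the equality $p + t = p + s - 1$, i.e., $t = s - 1$. As a by-product $s \geq 2$, for otherwise $s = 1$ would give $t = 0$ and $\alpha_1 = (\alpha_2 + 1)p$, violating $\gcd(\alpha_1, p) = 1$.

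The second divisibility then has a nonzero left-hand side, so I can write $(\alpha_2 - 1)s - (p - 1) = \varepsilon d_p$ with $\varepsilon \in \{-1, 1\}$ and $d_p \in \mathrm{Div}(p - 1)$. Solving for $\alpha_2$ and using $q = p + s$ gives $\alpha_2 = (q - 1 - \varepsilon d_p)/s$, after absorbing the sign into $\varepsilon$; this is precisely the parametrization claimed in part (2).

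For part (1), I rewrite $\alpha_1 = (\alpha_2 + 1)p + (s - 1) = \alpha_2 p + q - 1$. If $p = 2$ then $p \mid \alpha_1$, contradicting $\gcd(\alpha_1, N) = 1$; hence $p$ and $q$ are both odd, so $q - 1$ is even. The identity above gives $\gcd(\alpha_1, \alpha_2) = \gcd(q - 1, \alpha_2) = 1$, which combined with $q - 1$ even forces $\alpha_2$ to be odd. Feeding this back into $(\alpha_2 - 1)s = p - 1 - \varepsilon d_p$ produces an even left side against an even $p - 1$, so $d_p$ must be even. The bound $1 \leq \alpha_2 \leq p$ is immediate from Proposition~\ref{encadr3}(1).

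The converse in part (2) is a direct verification: with the stated $t$, $\alpha_2$, and the ensuing $\alpha_1$, one computes $\alpha_2 p - \alpha_1 = -(q - 1)$ and $\alpha_2 q - \alpha_1 = -\varepsilon d_p$, which divide $p(q - 1)$ and $q(p - 1)$ respectively, so the Korselt criterion holds. I expect the main obstacle to be the parity bookkeeping: one must carefully exclude $p = 2$ and then thread $\gcd(\alpha_1, \alpha_2) = 1$ through the identity $\alpha_1 = \alpha_2 p + q - 1$ to extract simultaneously that $\alpha_2$ is odd and $d_p$ is even.
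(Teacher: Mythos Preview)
Your argument is correct and follows essentially the same route as the paper: specialize $(S_5)$ to $\alpha_2=j-1$, force $t=s-1$ from $p+t\mid p+s-1$, exclude $p=2$, use parity (via your identity $\alpha_1=\alpha_2 p+q-1$, where the paper instead argues directly on $(\alpha_2+1)p+t$) to get $\alpha_2$ odd and $d_p$ even, and parametrize the second divisibility. One cosmetic slip: the second constraint you display as $(\alpha_2-1)s-(p-1)\mid p-1$ already has $t=s-1$ substituted, which you only derive in the next sentence---the raw specialization of $(S_5)$ gives $\alpha_2 s - t - p \mid p-1$, and your displayed form follows once $t=s-1$ is in hand.
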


\begin{proof}
\begin{enumerate}

\item First, let us show  that $p\neq2$. Since $\alpha$ is a $K_{N}$-base with  $\gcd(\alpha_{1},p)=\gcd(\alpha_{1},q)=1$, $q=p+s$ and $\alpha_{1}=(\alpha_{2}+1)p+t$, we get by $(S_{5})$
\begin{IEEEeqnarray}{rlCl}
& p+t  & \mid & p+s-1 \label{eq22}
\\*[-0.625\normalbaselineskip]
\smash{(S_{6})\hspace{1cm}\left\{
\IEEEstrut[5\jot]
\right.} \nonumber
\\*[-0.625\normalbaselineskip]
& p+t-\alpha_{2}s  & \mid & p-1 \label{eq23}
\end{IEEEeqnarray}

By $\eqref{eq22}$ we must have $p+t =p+s-1 $, hence $t=s-1$.

 Suppose by contradiction that $p=2$. As $1\leq s\leq p-1=1$, it follows that $s=1$ hence $t=0$ and so $\alpha_{1}=(\alpha_{2}+1)p$. Therefore,   $p=2\mid\alpha_{1}$, which  contradicts the fact that $\gcd(\alpha_{1},p)=1$.

Now, since $p\geq3$ and $q=p+s$, it follows that $s$ is even and so $t$ is odd. Further, we claim that $\alpha_{2}$ is odd, else if $\alpha_{2}$ is even and as $t$ is odd then $\alpha_{1}=(\alpha_{2}+1)p+t$   will be even, hence $2\mid\gcd(\alpha_{1},\alpha_{2})=1$, which is impossible.

Proposition ~\ref{encadr3} (1)  implies that $1\leq \alpha_{2} \leq p$.

\item Since $p\geq3$ and $s$ is even, we obtain  by  $(S_{6})$ the equivalence: $\alpha\in \mathbb{Q}$-$\mathcal{KS}(N)$ if and only if
 there exist $\varepsilon \in \{-1,1\}$ and an even integer $d_{p}\in Div(p-1)$ such that

$$ \left\{\begin{array}{lrl}
   t=s-1\\
   p-1-(\alpha_{2}-1)s   = \varepsilon d_{p}\\
   \end{array}
  \right. $$

  This is equivalent to  $t=s-1$ and $\alpha_{2}= \dfrac{q-1-\varepsilon d_{p}}{s}$.

\end{enumerate}

\end{proof}

\begin{lemma}\label{encadr7}

 Let $\alpha=\dfrac{\alpha_{1}}{\alpha_{2}}\in\mathbb{Q}$ be such that $\gcd(\alpha_{1},N)=1$.
 Suppose that $q<2p$, $j\geq1$  and $\alpha_{1}=\alpha_{2}p+t$ \ \ $($i.e. $\alpha_{2}=j)$. Then,
 $\alpha$ is a $K_{N}$-base if and only if  there exist $\varepsilon \in \{-1,1\}$, $d_{p}\in Div(p-1)$ and $d_{q}\in Div(q-1)$ such that \, $t=d_{q}\leq p-1$ and $\alpha_{2}=\dfrac{d_{q}+\varepsilon d_{p}}{s}>0$.

\end{lemma}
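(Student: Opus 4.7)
The plan is to exploit the system $(S_{5})$ established in the proof of Proposition~\ref{encadr5}: substituting $\alpha_{2}=j$ there kills the term $(\alpha_{2}-j)p$, so $(S_{5})$ collapses to the simple pair
\begin{equation*}
\left\{\begin{array}{rcl}
t & \mid & q-1,\\
\alpha_{2}s-t & \mid & p-1,
\end{array}\right.
\end{equation*}
and the entire lemma amounts to recording the integer solutions $(t,\alpha_{2})$ of this pair and repackaging them as a choice of $(\varepsilon,d_{p},d_{q})$.

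For the forward direction, I would begin by ruling out $t=0$: otherwise $\alpha_{1}=\alpha_{2}p$ would force $p\mid\alpha_{1}$, contradicting $\gcd(\alpha_{1},p)=1$. Thus $1\le t\le p-1$ and $t\mid q-1$, so setting $d_{q}:=t\in Div(q-1)$ gives the stated condition $t=d_{q}\le p-1$. For the second constraint, I would observe that $\alpha_{2}s-t=\alpha_{2}q-\alpha_{1}$ is nonzero (otherwise $\alpha=q$, which makes the divisibility $\alpha_{2}q-\alpha_{1}\mid q(p-1)$ impossible), and then write $\alpha_{2}s-t=\varepsilon d_{p}$ with $\varepsilon:=\mathrm{sgn}(\alpha_{2}s-t)\in\{-1,1\}$ and $d_{p}:=\abs{\alpha_{2}s-t}\in Div(p-1)$. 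Solving for $\alpha_{2}$ then yields $\alpha_{2}=(d_{q}+\varepsilon d_{p})/s$, with positivity automatic since $\alpha_{2}\in\mathbb{N}^{*}$.

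The converse is a direct verification: given $(\varepsilon,d_{p},d_{q})$ as described, set $t:=d_{q}$ and $\alpha_{1}:=\alpha_{2}p+d_{q}$. Both reduced divisibilities then hold ($t=d_{q}\mid q-1$ and $\alpha_{2}s-t=\varepsilon d_{p}\mid p-1$); tracing the chain $(S_{1})\Leftrightarrow(S_{2})\Leftrightarrow(S_{5})$ backwards from Proposition~\ref{encadr5} shows that $\alpha$ is a $K_{N}$-base. Coprimality $\gcd(\alpha_{1},N)=1$ follows from $\alpha_{1}\equiv d_{q}\pmod{p}$ and $\alpha_{1}\equiv -\varepsilon d_{p}\pmod{q}$, both nonzero residues because $1\le d_{p},d_{q}\le p-1<q$. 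I anticipate no real obstacle: the lemma is essentially an algebraic repackaging once the $(\alpha_{2}-j)p$ term is annihilated, and the only mild care needed is the sign bookkeeping that introduces $\varepsilon$ together with the observation that $\alpha_{2}s-t\ne 0$.
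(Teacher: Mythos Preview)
Your proposal is correct and follows essentially the same route as the paper: substitute $\alpha_{2}=j$ into the system $(S_{5})$ so that it collapses to $t\mid q-1$ and $\alpha_{2}s-t\mid p-1$, then parametrize the solutions by $(\varepsilon,d_{p},d_{q})$. You supply more detail than the paper on the edge cases $t\neq 0$ and $\alpha_{2}s-t\neq 0$ and on the converse verification, but the argument is the same.
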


\begin{proof}
Suppose that $\alpha\in \mathbb{Q}$-$\mathcal{KS}(N)$ with  $\gcd(\alpha_{1},p)=\gcd(\alpha_{1},q)=1$, $\alpha_{1}=\alpha_{2}p+t$ and $q=p+s$. Then $(S_{5})$ becomes

$$ \left\{\begin{array}{lrl}
   t  & \mid & q-1 \\
   \alpha_{2}s-t  & \mid & p-1\\
   \end{array}
  \right. $$

It follows that, $\alpha$ is a $K_{N}$-base if and only if
 there exist $\varepsilon \in \{-1,1\}$, $d_{p}\in Div(p-1)$ and $d_{q}\in Div(q-1)$ such that

$$\left\{\begin{array}{lrl}
   t   = d_{q}\\
   \alpha_{2}s-t   = \varepsilon d_{p} \\
   \end{array}
  \right. $$

which is equivalent to  $t= d_{q}$ and $\alpha_{2}=\dfrac{d_{q}+\varepsilon d_{p}}{s}>0$.

\end{proof}

\begin{lemma}\label{encadr8}

 Let $\alpha=\dfrac{\alpha_{1}}{\alpha_{2}}\in\mathbb{Q}$ be such that $\gcd(\alpha_{1},N)=1$.
 Suppose that $q<2p$ and $\alpha_{1}=(\alpha_{2}-1)p+t$ \,  $($i.e. $\alpha_{2}=j+1)$. Then, $\alpha$ is a $K_{N}$-base if and only if there exist $d_{p}\in Div(p-1)$, $d_{p}\geq2$ and $d_{q}\in Div(q-1)$ such that \, $t=p-d_{q}>0$ and $\alpha_{2}=\dfrac{d_{p}- d_{q}}{s}>0$.

\end{lemma}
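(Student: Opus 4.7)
The plan is to specialize the system $(S_{5})$ from the proof of Proposition~\ref{encadr5} to the case $\alpha_{2}-j=1$. With $\alpha_{1}=(\alpha_{2}-1)p+t$ and $q=p+s$, the system becomes
\[
p-t\mid q-1 \qquad\text{and}\qquad p+\alpha_{2}s-t\mid p-1,
\]
and the stated characterization will be obtained by reading each of these two divisibilities separately, in the same spirit as Lemma~\ref{encadr7}.

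For the first divisibility, I would invoke $\gcd(\alpha_{1},p)=1$: since $\alpha_{1}=(\alpha_{2}-1)p+t$, this forces $t\neq 0$, whence $1\le t\le p-1$ and $p-t\in\{1,\dots,p-1\}$ is a positive divisor of $q-1$. Setting $d_{q}:=p-t\in Div(q-1)$ yields exactly $t=p-d_{q}>0$.

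For the second divisibility, the crucial observation is that $\alpha_{2}\ge 1$, $s\ge 1$ and $t\le p-1$ together imply
\[
2\;\le\;p+\alpha_{2}s-t\;\le\;p-1,
\]
the upper bound being forced because a positive divisor of $p-1$ cannot exceed $p-1$. Writing $d_{p}:=p+\alpha_{2}s-t$, we therefore obtain $d_{p}\in Div(p-1)$ with $d_{p}\ge 2$. Combined with $p-t=d_{q}$, this gives $\alpha_{2}s=d_{p}-d_{q}$, hence $\alpha_{2}=(d_{p}-d_{q})/s$, and the positivity of $\alpha_{2}$ is precisely $d_{p}>d_{q}$.

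The converse direction is a direct substitution: given $d_{p}\in Div(p-1)$ with $d_{p}\ge 2$ and $d_{q}\in Div(q-1)$ for which $t:=p-d_{q}>0$ and $\alpha_{2}:=(d_{p}-d_{q})/s$ is a positive integer, both divisibilities of $(S_{5})$ with $\alpha_{2}-j=1$ are immediate, so $\alpha$ is a $K_{N}$-base. The only point requiring real care is the two-sided interval estimate that produces the bound $d_{p}\ge 2$ (and the implicit integrality constraint $s\mid d_{p}-d_{q}$); beyond that, the argument is mechanical and strictly parallel to Lemmas~\ref{encadr6} and~\ref{encadr7}.
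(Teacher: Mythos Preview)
Your proof is correct and follows essentially the same approach as the paper: specialize $(S_{5})$ to $\alpha_{2}-j=1$, read off $p-t=d_{q}$ and $p-t+\alpha_{2}s=d_{p}$, and solve for $t$ and $\alpha_{2}$. You supply slightly more detail than the paper does (the justification that $t\neq 0$ via $\gcd(\alpha_{1},p)=1$, and the explicit lower bound $p+\alpha_{2}s-t\ge 2$), but the argument is otherwise identical.
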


\begin{proof}
If $\alpha\in \mathbb{Q}$-$\mathcal{KS}(N)$ such that  $\gcd(\alpha_{1},p)=\gcd(\alpha_{1},q)=1$, $q=p+s$ and  $\alpha_{1}=(\alpha_{2}-1)p+t$,  then $(S_{5})$ becomes

$$ \left\{\begin{array}{lrl}
   p-t  & \mid & q-1 \\
   p-t+\alpha_{2}s  & \mid & p-1\\
   \end{array}
  \right. $$

Therefore, $\alpha$ is a $K_{N}$-base if and only if
 there exist   $d_{p}\in Div(p-1)$ and $d_{q}\in Div(q-1)$ such that

$$\left\{\begin{array}{lrl}
   p-t   = d_{q}\\
   \alpha_{2}s+p-t   = d_{p}\geq2 \\
   \end{array}
  \right. $$

  this is equivalent to  $t= p-d_{q}>0$ and $\alpha_{2} =\dfrac{d_{p}- d_{q}}{s}>0$.

\end{proof}

By the next result, we give bounds for $\alpha_{2}$ when  $\alpha=\dfrac{\alpha_{1}}{\alpha_{2}}$ is a $K_{N}$-base where  $\gcd(\alpha_{1},N)=q$.
\begin{proposition}\label{encadr9}
   Let  $\alpha=\dfrac{\alpha_{1}}{\alpha_{2}}\in\mathbb{Q}$-$\mathcal{KS}(N)$ be such that $\gcd(\alpha_{1},N)=q$. Then the following assertions  hold.

   \begin{enumerate}
       \item If $q<2p$, then $1\leq\alpha_{2}\leq \dfrac{p(p+2)}{2}$.
       \item If $q>2p$, then $1\leq\alpha_{2}< \dfrac{p(i+1)}{i-1}$.
   \end{enumerate}

\end{proposition}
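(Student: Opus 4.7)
The plan is to reduce each Korselt divisibility to a ``small-divisor'' statement by a $\gcd$ argument, then combine the two resulting relations into an explicit formula for $\alpha_{2}$; the two parts will follow from straightforward algebra on this formula.

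First, I will write $\alpha_{1}=q\alpha_{1}'$ with $p\nmid \alpha_{1}'$, so that $\gcd(\alpha_{1},\alpha_{2})=1$ yields both $\gcd(\alpha_{2},q)=1$ and $\gcd(\alpha_{2},\alpha_{1}')=1$. The identity $\alpha_{2}N-\alpha_{1}=q(\alpha_{2}p-\alpha_{1})+(q-1)\alpha_{1}$ together with $\gcd(\alpha_{2}p-\alpha_{1},\alpha_{1})=\gcd(\alpha_{2}p,\alpha_{1})=1$ sharpens the first Korselt divisibility to $\alpha_{2}p-\alpha_{1}\mid q-1$. Similarly, factoring $\alpha_{2}q-\alpha_{1}=q(\alpha_{2}-\alpha_{1}')$ and using $\gcd(\alpha_{2}-\alpha_{1}',\alpha_{1}')=1$ reduces the second to $\alpha_{2}-\alpha_{1}'\mid p-1$.

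Next, I will introduce $d_{1}\in Div(q-1)$, $d_{2}\in Div(p-1)$ and signs $\varepsilon_{1},\varepsilon_{2}\in\{\pm 1\}$ by $\alpha_{1}-\alpha_{2}p=\varepsilon_{1}d_{1}$ and $\alpha_{2}-\alpha_{1}'=\varepsilon_{2}d_{2}$. Eliminating $\alpha_{1}$ gives
\[\alpha_{2}(q-p)=q\varepsilon_{2}d_{2}+\varepsilon_{1}d_{1}.\]
Since $\lvert\varepsilon_{1}d_{1}\rvert\leq q-1<qd_{2}$, the right-hand side is negative when $\varepsilon_{2}=-1$, so positivity of $\alpha_{2}$ forces $\varepsilon_{2}=+1$. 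Hence
\[\alpha_{2}\leq\frac{q(p-1)+(q-1)}{q-p}=\frac{pq-1}{q-p}.\qquad(\star)\]

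It then remains to derive each claim from $(\star)$. For Part~(2), with $q=ip+s$ and $i\geq 2$, the desired inequality $(pq-1)/(q-p)<p(i+1)/(i-1)$ rearranges to $p^{2}(i+1)-(i-1)<2pq$, which is immediate from $q\geq ip\geq p(i+1)/2$. For Part~(1), setting $s=q-p$, the inequality $(pq-1)/s\leq p(p+2)/2$ rearranges (using $q=p+s$) to $s\geq 2-2/p^{2}$, so $(\star)$ concludes whenever $s\geq 2$. The only remaining case is $s=1$, which can only occur for $(p,q)=(2,3)$; there $d_{2}=1$ and $d_{1}\in\{1,2\}$ produce $\alpha_{2}\in\{1,2,4,5\}$, and the arithmetic constraint $p\nmid \alpha_{1}'=\alpha_{2}-1$ excludes $\alpha_{2}=5$, leaving $\alpha_{2}\leq 4=p(p+2)/2$.

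The main obstacle will not be the algebra, which is routine once the Korselt conditions have been trimmed in the reduction step; it is the degenerate pair $(p,q)=(2,3)$, for which $(\star)$ gives only $\alpha_{2}\leq 5$ and one must invoke the extra arithmetic constraint $p\nmid \alpha_{1}'$ to recover the sharp bound $p(p+2)/2=4$.
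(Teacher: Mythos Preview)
Your proof is correct and reaches the same pivotal inequality $\alpha_{2}(q-p)\leq pq-1$ as the paper, but you get there by a somewhat different mechanism. The paper keeps the second Korselt condition in the cruder form $\alpha_{2}q-\alpha_{1}\mid q(p-1)$ and simply sums the two absolute-value bounds $-\alpha_{2}p+\alpha_{1}\leq q-1$ and $\alpha_{2}q-\alpha_{1}\leq q(p-1)$; you instead reduce it all the way down to $\alpha_{2}-\alpha_{1}'\mid p-1$, parametrise both divisibilities as $\varepsilon_{1}d_{1},\,\varepsilon_{2}d_{2}$, and use a sign argument to force $\varepsilon_{2}=+1$. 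The payoff of your extra work is the exact identity $\alpha_{2}(q-p)=qd_{2}+\varepsilon_{1}d_{1}$, which is in fact the content of the paper's later Lemma~\ref{encadr12}; so your argument for Proposition~\ref{encadr9} is effectively a preview of that lemma. For the degenerate pair $(p,q)=(2,3)$ the paper just asserts $\alpha\in\{3/2,\,9/4\}$ by inspection, whereas your parametrisation handles it uniformly once you invoke $p\nmid\alpha_{1}'$ to discard $\alpha_{2}=5$. Both routes are short; the paper's is more direct for this proposition, while yours is more structural and dovetails with what comes afterwards.
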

\begin{proof}

  Suppose that $\alpha\in\mathbb{Q}$-$\mathcal{KS}(N)$  such that $\gcd(\alpha_{1},N)=q$, so that $\gcd(\alpha_{1},p)=1$. Then, by
$(S_{1})$ we get

$$  \left\{\begin{array}{rrl}
   \alpha_{2}p-\alpha_{1}  & \mid & q-1\\
   \alpha_{2}q-\alpha_{1}  & \mid & q(p-1)\\
   \end{array}
  \right. $$
Therefore
\begin{IEEEeqnarray}{rlCl}
&  -\alpha_{2}p+\alpha_{1} & \leq & q-1 \label{eq28}
\\*[-0.625\normalbaselineskip]
\smash{\left\{
\IEEEstrut[5\jot]
\right.} \nonumber
\\*[-0.625\normalbaselineskip]
& \alpha_{2}q-\alpha_{1}  & \leq& q(p-1) \label{eq29}
\end{IEEEeqnarray}

 hence, by summing  $\eqref{eq28}$ and $\eqref{eq29}$ we get
   \begin{equation}\label{eq30}
   \alpha_{2}(q-p)   \leq qp-1<qp. \end{equation}
 Two cases are to be discussed.
  \begin{enumerate}
  \item If $i=1$, then $q=p+s$. We distinguish two subcases.
  \begin{itemize}
    \item If $s$ is even (i.e $s\geq2$), then by  $\eqref{eq30}$ we obtain
  \begin{equation*}\label{eq33}
  \alpha_{2}s=\alpha_{2}(q-p)<qp=(p+s)p.\end{equation*}
  As $s\geq2$, it follows that
  \begin{equation}\label{eq34}
  \alpha_{2}<\dfrac{(p+s)p}{s}\leq\dfrac{(p+2)p}{2}.\end{equation}
    \item If $s$ is odd (i.e $s=1$, $q=3$ and $p=2$), then easily we can prove that $\alpha\in \left\{\dfrac{3}{2}, \dfrac{9}{4}\right\}$, and so  $\alpha_{2}\leq\dfrac{(p+2)p}{2}$.
  \end{itemize}
\item  If $q=ip+s>2p$, then by $\eqref{eq30}$ we get
  \begin{equation*}\label{eq31}\alpha_{2}(i-1)p <\alpha_{2}((i-1)p+s)=\alpha_{2}(q-p)  < qp<(i+1)p^{2}.\end{equation*}
hence  $  \alpha_{2}<\dfrac{(i+1)p}{i-1}$.

\end{enumerate}
\end{proof}
Now, by the following two results, we determine all $K_{N}$-bases $\alpha\in\mathbb{Q}$  such that   $\gcd(\alpha_{1},N)=q$  and $q<2p$.
\begin{proposition}\label{encadr10}
Let  $\alpha=\dfrac{\alpha_{1}}{\alpha_{2}}\in\mathbb{Q}$-$\mathcal{KS}(N)$  such that   $\gcd(\alpha_{1},N)=q$. Suppose that  $\alpha_{1}=\alpha_{1}^{'}q$ and $q<2p$. Then the following assertions hold.

   \begin{enumerate}
       \item $\alpha_{1}^{'}<\alpha_{2}$.
       \item  If $\alpha_{1}^{'}=1$ then $\alpha_{2}\in\{2,3\}$, both for $p = 3$; $q = 5$, otherwise we have
       \begin{enumerate}
         \item $\alpha_{2}=2$ if and only if $(p-s)\mid (2s-1)$.
         \item $\alpha_{2}=3$ if and only if $p>2$  and $s=\dfrac{p+1}{2}$.
       \end{enumerate}

   \end{enumerate}
\end{proposition}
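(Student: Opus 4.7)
The plan is to first reduce the defining system $(S_1)$ under the hypotheses $\gcd(\alpha_1,N)=q$ and $\alpha_1=\alpha_1'q$. Since $\gcd(\alpha_1,p)=1$, the first relation simplifies to $\alpha_2 p-\alpha_1\mid q-1$, while the second, via $\alpha_2 q-\alpha_1=q(\alpha_2-\alpha_1')$, collapses to $\alpha_2-\alpha_1'\mid p-1$. By Proposition~\ref{kornum}(2), $\alpha>0$ so $\alpha_1'\geq 1$, and throughout the paper we assume $\alpha_2\geq 2$.

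For (1), I would rule out $\alpha_1'\geq\alpha_2$ directly. Equality forces $\alpha_2\mid\alpha_1$ and hence $\alpha_2=1$ by coprimality, contradicting $\alpha_2\geq 2$. If $\alpha_1'\geq\alpha_2+1$, then using $q=p+s$,
\[
\alpha_1-\alpha_2 p=\alpha_1' q-\alpha_2 p\geq(\alpha_2+1)q-\alpha_2 p=\alpha_2 s+q>q-1,
\]
contradicting $\alpha_2 p-\alpha_1\mid q-1$. Hence $\alpha_1'<\alpha_2$.

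For (2), set $\alpha_1'=1$, so $\alpha_1=q$ and the first condition becomes $(\alpha_2-1)p-s\mid p+s-1$. Since $1\leq s\leq p-1$, the divisor is positive and grows linearly in $\alpha_2$; the crucial observation is that for $\alpha_2\geq 4$,
\[
(\alpha_2-1)p-s\geq 3p-s>p+s-1
\]
(using $p>s$, hence $2p>2s-1$), which rules out divisibility and forces $\alpha_2\in\{2,3\}$. For $\alpha_2=2$, the divisor is $p-s$ and the identity $p+s-1=(p-s)+(2s-1)$ yields the equivalent condition $(p-s)\mid(2s-1)$, while $\alpha_2-1=1\mid p-1$ is automatic. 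For $\alpha_2=3$, the divisor is $2p-s$; writing $p+s-1=k(2p-s)$ with $k\geq 1$, the case $k\geq 2$ forces $3s\geq 3p+1$, contradicting $s\leq p-1$, so $k=1$ and $2s=p+1$, which in turn requires $p\geq 3$ odd (the auxiliary $2\mid p-1$ then holds). Finally, for $p=3$, $q=5$ one has $s=2$, so both $p-s=1\mid 3=2s-1$ and $s=(p+1)/2$ hold, and both $\alpha_2=2$ and $\alpha_2=3$ give valid bases simultaneously.

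The main obstacle is the sharpness of the upper-bound step in (2): the inequality $3p-s>p+s-1$ is just tight enough to exclude $\alpha_2\geq 4$, and one must carefully eliminate every quotient $k\geq 2$ in the $\alpha_2=3$ analysis rather than settling for looser estimates. Once these size considerations are in place, conditions (a) and (b) emerge as straightforward algebraic rewrites of the remaining divisibility.
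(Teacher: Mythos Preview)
Your proof is correct and follows essentially the same approach as the paper: you derive the same reduced system $(S_7)$ (written in the equivalent form $\alpha_2 p-\alpha_1\mid q-1$ and $\alpha_2-\alpha_1'\mid p-1$), rule out $\alpha_1'\geq\alpha_2$ by the identical size argument, bound $\alpha_2<4$ via $3p-s>p+s-1$, and handle the cases $\alpha_2=2,3$ by the same algebraic rewrites. The only cosmetic difference is that you exclude $\alpha_1'=\alpha_2$ via $\gcd(\alpha_1,\alpha_2)=1$, whereas the paper simply notes that $0\nmid p-1$.
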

\begin{proof}
\begin{enumerate}
  \item Let  $\alpha=\dfrac{\alpha_{1}}{\alpha_{2}}\in\mathbb{Q}$-$\mathcal{KS}(N)$ be such that $\gcd(\alpha_{1},p)=1$, $\alpha_{1}=\alpha_{1}^{'}q$ and $q=p+s$ .
 Then $(S_{1})$ is equivalent to

\begin{IEEEeqnarray}{rlCl}
& (\alpha_{2}-\alpha_{1}^{'})p-\alpha_{1}^{'}s & \mid & p+s-1 \label{eq35}
\\*[-0.625\normalbaselineskip]
\smash{(S_{7})\hspace{1cm}\left\{
\IEEEstrut[5\jot]
\right.} \nonumber
\\*[-0.625\normalbaselineskip]
& \alpha_{2}-\alpha_{1}^{'}  & \mid & p-1 \label{eq36}
\end{IEEEeqnarray}

  First, by $\eqref{eq36}$, we have  $\alpha_{1}^{'}\neq\alpha_{2}$.  Now,  we claim that  $\alpha_{1}^{'}<\alpha_{2}$. If is not the case and as $\alpha_{1}^{'}>0$, then   we get $(\alpha_ {1}^{'}-\alpha_{2})p+\alpha_{1}^{'}s\geq p+s>0$, which contradicts $\eqref{eq35}$.

  \item Suppose that $\alpha_{1}^{'}=1$. Then by $\eqref{eq35}$  we obtain $$(\alpha_{2}-1)p-s \leq p+s-1.$$

  As $s\leq p-1$, it follows that $(\alpha_{2}-2)p \leq 2s-1<2p,$
   hence $\alpha_{2}<4$. But, since $\alpha_{2}>\alpha_{1}^{'}=1$, it yields that $\alpha_{2}\in\{2,3\}$.

   \begin{enumerate}
    \item If $\alpha_{2}=2$, then since $\alpha_{1}^{'}=1$,  $(S_{7})$ will be  reduced to $(p-s) \mid (p+s-1)=p-s+2s-1,$ which is equivalent to  $(p-s)\mid (2s-1)$.
    \item Now, suppose that $\alpha_{2}=3$. Then $(S_{7})$ is simply reduced to $2\mid p-1$ and $(2p-s)\mid (p+s-1).$ Since $p+s-1<2(2p-s)$, it follows that $(S_{7})$ is equivalent to $ p>2$ and $2p-s=p+s-1$, so $s=\dfrac{p+1}{2}$.
  \end{enumerate}

  \end{enumerate}
\end{proof}
Now, in the  next result  where $q<2p$ and $\alpha_{1}=\alpha_{1}^{'}q\geq2q$, we consider   the Euclidean division of $\alpha_{2}$ by $\alpha_{1}^{'}$: $\alpha_{2}=m\alpha_{1}^{'}+r$ with $m\geq1$ and $1\leq r\leq\alpha_{1}^{'}-1$.

\begin{lemma}\label{encadr11}
Let  $\alpha=\dfrac{\alpha_{1}}{\alpha_{2}}\in\mathbb{Q}$-$\mathcal{KS}(N)$  be such that   $\gcd(\alpha_{1},N)=q$. Suppose that  $\alpha_{1}=\alpha_{1}^{'}q\geq2q$ and $q<2p$. Then the following  hold.

   \begin{enumerate}
       \item $m\in\{1,2\}$.
       \item If  $m=2$, then $r=1$, $\alpha_{1}^{'}=\dfrac{s-1}{p-s}$ and $\alpha_{2}=\dfrac{q-2}{p-s}$ .
       \item If  $m=1$, then there exist  $d_{p}\in Div(p-1)$, $d_{q}\in Div(q-1)$ and  $\varepsilon \in \{-1,1\}$ such that $\alpha_{1}^{'}=\dfrac{pd_{p}-\varepsilon d_{q}}{s}$ and $\alpha_{2}=\dfrac{qd_{p}-\varepsilon d_{q}}{s}$.

   \end{enumerate}
\end{lemma}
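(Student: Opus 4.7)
The plan is to substitute the Euclidean division $\alpha_{2}=m\alpha_{1}^{'}+r$ into the system $(S_{7})$ from the proof of Proposition~\ref{encadr10}. Since $\alpha_{2}-\alpha_{1}^{'}=(m-1)\alpha_{1}^{'}+r$, the two divisibilities become
\begin{equation*}
\bigl((m-1)\alpha_{1}^{'}+r\bigr)p-\alpha_{1}^{'}s \ \Bigm|\ p+s-1 \qquad\text{and}\qquad (m-1)\alpha_{1}^{'}+r \ \Bigm|\ p-1.
\end{equation*}
I would first use the first divisibility as a size constraint to bound $m$, then treat the two surviving values of $m$ separately.

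For part (1), note that for $m\geq 2$ one has $(m-1)p-s\geq p-s\geq 1$, so the divisor is positive and equals $\alpha_{1}^{'}((m-1)p-s)+rp$. The first divisibility forces $\alpha_{1}^{'}((m-1)p-s)\leq p+s-1-rp\leq s-1$. For $m\geq 3$ the left-hand side is at least $2(2p-s)\geq 2p+2$, which contradicts $s-1\leq p-2$. Hence $m\in\{1,2\}$. For part (2), with $m=2$ the divisor $\alpha_{1}^{'}(p-s)+rp$ is at least $p$ and at most $p+s-1<2p$; since any proper divisor of $p+s-1$ is bounded by $(p+s-1)/2<p$, the only possibility is $\alpha_{1}^{'}(p-s)+rp=p+s-1$, i.e.\ $\alpha_{1}^{'}(p-s)=(1-r)p+s-1$. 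A value $r\geq 2$ would make the right-hand side negative, so $r=1$, which yields $\alpha_{1}^{'}=(s-1)/(p-s)$ and therefore $\alpha_{2}=2\alpha_{1}^{'}+1=(q-2)/(p-s)$.

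For part (3), set $m=1$. The second divisibility collapses to $r\mid p-1$, so $r=d_{p}$ for some $d_{p}\in Div(p-1)$, and the first becomes $d_{p}p-\alpha_{1}^{'}s \mid p+s-1=q-1$; accordingly write $d_{p}p-\alpha_{1}^{'}s=\varepsilon d_{q}$ with $d_{q}\in Div(q-1)$ and $\varepsilon\in\{-1,1\}$ to absorb the sign. Solving these two equations gives $\alpha_{1}^{'}=(pd_{p}-\varepsilon d_{q})/s$ and $\alpha_{2}=\alpha_{1}^{'}+d_{p}=(qd_{p}-\varepsilon d_{q})/s$, as required. The main obstacle is really the "largest divisor in $[p,p+s-1]$" argument that pins down part (2); once that is secured, everything else is a direct substitution into $(S_{7})$ and elementary manipulation.
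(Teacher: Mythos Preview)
Your proof is correct and follows essentially the same approach as the paper: both substitute $\alpha_{2}=m\alpha_{1}'+r$ into $(S_{7})$, use the first divisibility as a size bound to force $m\le 2$, then handle $m=2$ by showing the divisor must equal $p+s-1$, and finally read off the parametrisation for $m=1$ from the resulting system. The only cosmetic difference is that the paper rewrites the divisor as $((m-2)\alpha_{1}'+r)p+\alpha_{1}'(p-s)$ and obtains $m\le 2$ together with $r=1$ (when $m=2$) from the single inequality $(m-2)\alpha_{1}'+r\le 1$, whereas you rearrange it as $\alpha_{1}'((m-1)p-s)+rp$ and treat the cases $m\ge 3$ and $r\ge 2$ separately; the content is the same.
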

\begin{proof}

\begin{enumerate}
  \item Let $\alpha=\dfrac{\alpha_{1}}{\alpha_{2}}\in\mathbb{Q}$-$\mathcal{KS}(N)$ be such that   $\alpha_{1}=\alpha_{1}^{'}q$, $q=p+s$ and $\gcd(\alpha_{1},p)=1$. Then by $\eqref{eq35}$ we obtain

\begin{equation}\label{eq37}
(\alpha_{2}-\alpha_{1}^{'})p-\alpha_{1}^{'}s  \mid  p+s-1.\end{equation}

Since $\alpha_{2}=m\alpha_{1}^{'}+r$ and $s\leq p-1$, it follows that
\begin{equation*}\label{eq38}
((m-2)\alpha_{1}^{'}+r)p+\alpha_{1}^{'}(p-s)=(\alpha_{2}-\alpha_{1}^{'})p-\alpha_{1}^{'}s\leq p+s-1<2p.\end{equation*}
As $\alpha_{1}^{'}(p-s)>0$, we get $(m-2)\alpha_{1}^{'}+r\leq1$,  hence \begin{equation}\label{eq39}
(m-2)\alpha_{1}^{'}\leq 1-r\leq0.\end{equation}
This implies that  $m\leq2$.

  \item Assume that $m=2$. Then by $\eqref{eq39}$ we obtain $r=1$ and so $\alpha_{2}=2\alpha_{1}^{'}+1$. Therefore $\eqref{eq37}$ becomes
 $ p+\alpha_{1}^{'}(p-s)\mid p+s-1$.  But since  $2(p+\alpha_{1}^{'}(p-s))>2p> p+s-1$, it follows that $p+\alpha_{1}^{'}(p-s)= p+s-1$. Hence $\alpha_{1}^{'}= \dfrac{s-1}{p-s}$ and  so $\alpha_{2}=\dfrac{q-2}{p-s}.$
  \item Suppose that $m=1$. Then  $\alpha_{2}=\alpha_{1}^{'}+r$ with $1\leq r\leq\alpha_{1}^{'}-1$. Therefore $(S_{7})$ becomes
$$ \left\{\begin{array}{rrl}
 rp-\alpha_{1}^{'}s & \mid & q-1\\
   r & \mid & p-1\\
   \end{array}
  \right. $$

   So, $\alpha\in \mathbb{Q}$-$\mathcal{KS}(N)$ if and only if
 there exist $d_{q}\in Div(q-1)$, $d_{p}\in Div(p-1)$ and $\varepsilon \in \{-1,1\}$ such that
  $$ \left\{\begin{array}{rrl}
 rp-\alpha_{1}^{'}s & =& \varepsilon d_{q}\\
   r & =& d_{p}\\
   \end{array}
  \right. $$
  This is equivalent to  $\alpha_{1}^{'}=\dfrac{pd_{p}-\varepsilon d_{q}}{s}$ and $\alpha_{2}=\dfrac{qd_{p}-\varepsilon d_{q}}{s}$.

\end{enumerate}

\end{proof}
The remaining case where $\gcd(\alpha_{1},N)=q$ and $q>2p$ will be treated in the following lemma.
\begin{lemma}\label{encadr12}
Let $\alpha=\dfrac{\alpha_{1}}{\alpha_{2}}\in\mathbb{Q}$ be such that $\gcd(\alpha_{1},N)=q$.
Suppose that $\alpha_{1}=\alpha_{1}^{'}q$ and $q>2p$. Then, $\alpha$ is a $K_{N}$-base if and only if
 there exist $\varepsilon \in \{-1,1\}$, $d_{p}\in Div(p-1)$ and $d_{q}\in Div(q-1)$ such that $\alpha_{1}^{'}=\dfrac{pd_{p}-\varepsilon d_{q}}{q-p}$ and $\alpha_{2}=\dfrac{qd_{p}-\varepsilon d_{q}}{q-p}$.

\end{lemma}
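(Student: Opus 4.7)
The plan is to follow the same scheme as in Lemma~\ref{encadr11}(3), adapted to the range $q>2p$, and to show that only one configuration arises (so the analog of the $m=2$ case from Lemma~\ref{encadr11}(2) does not occur here, which is precisely why the conclusion is a single clean parametric family).

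First I would reduce the system $(S_{1})$. Since $\gcd(\alpha_{1},N)=q$ and $\alpha_{1}=\alpha_{1}'q$, we have $\gcd(\alpha_{1}',p)=\gcd(\alpha_{1},p)=1$, so $\alpha_{2}p-\alpha_{1}=\alpha_{2}p-\alpha_{1}'q$ is coprime to $p$. Hence the first divisibility $\alpha_{2}p-\alpha_{1}\mid p(q-1)$ simplifies to $\alpha_{2}p-\alpha_{1}'q\mid q-1$. The second divisibility reads $(\alpha_{2}-\alpha_{1}')q\mid q(p-1)$, which is equivalent to $\alpha_{2}-\alpha_{1}'\mid p-1$. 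Rewriting $\alpha_{2}p-\alpha_{1}'q=(\alpha_{2}-\alpha_{1}')p-\alpha_{1}'(q-p)$, the system becomes
\[
(S)\quad
\begin{cases}
(\alpha_{2}-\alpha_{1}')p-\alpha_{1}'(q-p) \,\mid\, q-1,\\
\alpha_{2}-\alpha_{1}' \,\mid\, p-1.
\end{cases}
\]

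Next I would establish $\alpha_{2}>\alpha_{1}'$. The second line of $(S)$ forces $\alpha_{2}-\alpha_{1}'\neq 0$ (otherwise $0\mid p-1\geq 1$, impossible). Suppose, toward a contradiction, that $\alpha_{2}<\alpha_{1}'$; setting $r'=\alpha_{1}'-\alpha_{2}>0$, the first line of $(S)$ becomes $r'p+\alpha_{1}'(q-p)\mid q-1$, but since $\alpha_{1}'\geq 1$ and $q>2p$ we have $r'p+\alpha_{1}'(q-p)\geq p+(q-p)=q>q-1$, a contradiction. Thus $\alpha_{2}>\alpha_{1}'$, and this is precisely where the hypothesis $q>2p$ enters to kill the alternative branch that Lemma~\ref{encadr11} had to treat separately.

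Finally, I would extract the parametrization. Put $d_{p}=\alpha_{2}-\alpha_{1}'>0$; then the second line of $(S)$ says $d_{p}\in Div(p-1)$. Let $A=d_{p}p-\alpha_{1}'(q-p)$; the first line says $A\mid q-1$, and $A\neq 0$ (else $\alpha_{2}p=\alpha_{1}$, which would force $0\mid q-1$). Hence $A=\varepsilon d_{q}$ for some $\varepsilon\in\{-1,1\}$ and $d_{q}\in Div(q-1)$. Solving the linear system
\[
\begin{cases}
\alpha_{2}-\alpha_{1}'=d_{p},\\
d_{p}p-\alpha_{1}'(q-p)=\varepsilon d_{q},
\end{cases}
\]
gives $\alpha_{1}'=\dfrac{pd_{p}-\varepsilon d_{q}}{q-p}$ and $\alpha_{2}=\alpha_{1}'+d_{p}=\dfrac{qd_{p}-\varepsilon d_{q}}{q-p}$. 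Conversely, any triple $(d_{p},d_{q},\varepsilon)$ for which these expressions are integers yields a pair $(\alpha_{1}',\alpha_{2})$ satisfying $(S)$ and therefore $(S_{1})$, proving the equivalence. The only delicate step is the sign argument in the middle paragraph; the rest is bookkeeping.
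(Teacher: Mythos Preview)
Your argument is correct and matches the paper's proof essentially line for line: reduce $(S_1)$ to the two-condition system (the paper's $(S_8)$), establish $\alpha_2>\alpha_1'$ by a size argument, and solve the resulting linear system for $(\alpha_1',\alpha_2)$. One minor note: your contradiction $r'p+\alpha_1'(q-p)\geq p+(q-p)=q>q-1$ uses only $q>p$, not $q>2p$, so the remark that ``this is precisely where the hypothesis $q>2p$ enters'' is a slight misattribution---the paper's version of the same step likewise never invokes $q>2p$, and that hypothesis is purely contextual (it delimits which range this lemma is meant to cover).
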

\begin{proof}

Let  $\alpha\in\mathbb{Q}$-$\mathcal{KS}(N)$ be such that $\gcd(\alpha_{1},p)=1$, $\alpha_{1}=\alpha_{1}^{'}q$ and $q=ip+s$.  Then $(S_{1})$ is equivalent to
 \begin{IEEEeqnarray}{rlCl}
& \alpha_{2}p-\alpha_{1}^{'}q & \mid & q-1 \label{eq41}
\\*[-0.625\normalbaselineskip]
\smash{(S_{8})\hspace{1cm}\left\{
\IEEEstrut[5\jot]
\right.} \nonumber
\\*[-0.625\normalbaselineskip]
& \alpha_{2}-\alpha_{1}^{'}  & \mid & p-1\label{eq42}
\end{IEEEeqnarray}

 Let us show that $\alpha_{2}>\alpha_{1}^{'}$.  First, it's clear by $\eqref{eq42}$ that $\alpha_{2}\neq\alpha_{1}^{'}$.
 Now, suppose by contradiction that $\alpha_{2}<\alpha_{1}^{'}$. Then by $\eqref{eq41}$ we have
  $\alpha_{1}^{'}q-\alpha_{2}p\leq q-1$, hence $ q<(\alpha_{2}+1)q-\alpha_{2}p\leq\alpha_{1}^{'}q-\alpha_{2}p\leq q-1$,
which impossible.

Since $\alpha_{2}-\alpha_{1}^{'} >0$, it follows by $(S_{8})$ that  $\alpha$ is a $K_{N}$-base if and only if there exist $d_{p}\in Div(p-1)$,  $d_{q}\in Div(q-1)$ and $\varepsilon \in \{-1,1\}$  such that

  $$ \left\{\begin{array}{rrl}
   \alpha_{2}p-\alpha_{1}^{'}q  & = &\varepsilon d_{q}\\
   \alpha_{2}-\alpha_{1}^{'}  & =& d_{p}\\
   \end{array}
  \right. $$

This is equivalent to $\alpha_{1}^{'}=\dfrac{pd_{p}-\varepsilon d_{q}}{q-p}$ and $\alpha_{2}=\dfrac{qd_{p}-\varepsilon d_{q}}{q-p}$.

\end{proof}
Now, the case where  $\gcd(\alpha_{1},N)=p$ will be studied in the next two results.
\begin{proposition}\label{encadr13}

   Let $\alpha=\dfrac{\alpha_{1}}{\alpha_{2}}\in\mathbb{Q}$-$\mathcal{KS}(N)$ be such that $\gcd(\alpha_{1},N)=p$. Suppose that $\alpha_{1}=\alpha_{1}^{'}p$. Then the following assertions hold.
\begin{enumerate}
  \item If $\alpha_{2}=1$ then $\alpha_{1}^{'}\in\{i,i+1\}$.
  \item If $\alpha_{2}\geq2$ then $i\alpha_{2}+1\leq\alpha_{1}^{'}\leq (i+1)\alpha_{2}-1.$
\end{enumerate}

\end{proposition}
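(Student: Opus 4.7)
My plan is to specialize the system $(S_1)$ to $\alpha_1 = \alpha_1' p$ and exploit the coprimality conditions implicit in $\gcd(\alpha_1, N) = p$. The latter gives $\gcd(\alpha_1' p, q) = 1$, and combined with $\gcd(\alpha_1, \alpha_2) = 1$ it forces both $p \nmid \alpha_2$ and $\gcd(\alpha_1', \alpha_2) = 1$. Cancelling $p$ from the first divisibility of $(S_1)$ and $q$ from the second (legitimate by the coprimalities just noted), $(S_1)$ reduces to the equivalent pair $\alpha_2 - \alpha_1' \mid q - 1$ and $\alpha_2 q - \alpha_1' p \mid p - 1$.

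For assertion $(1)$, I specialize to $\alpha_2 = 1$, so the second relation becomes $q - \alpha_1' p \mid p - 1$. Since $p \nmid q$, this quantity is nonzero, giving $|q - \alpha_1' p| \leq p - 1$. Substituting $q = ip + s$ with $1 \leq s \leq p - 1$ and isolating $\alpha_1'$ pins it to the interval $[(i-1) + (s+1)/p,\,(i+1) + (s-1)/p]$, whose only integer points are $i$ and $i+1$.

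For assertion $(2)$, I introduce $k = \alpha_1' - i\alpha_2$, so that $\alpha_2 q - \alpha_1' p = \alpha_2 s - kp$, a nonzero divisor of $p - 1$ (nonzero because $p \nmid \alpha_2 s$, since neither $\alpha_2$ nor $s$ is a multiple of $p$). The bound $|\alpha_2 s - kp| \leq p - 1$ combined with $1 \leq s \leq p - 1$ and $\alpha_2 \geq 2$ rules out $k \leq -1$ (where $\alpha_2 s - kp \geq 1 + p > p-1$) and $k \geq \alpha_2 + 1$ (where $kp - \alpha_2 s \geq p + \alpha_2 > p-1$), leaving $0 \leq k \leq \alpha_2$. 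The two endpoints $k = 0$ and $k = \alpha_2$ both force $\alpha_2 \mid \alpha_1'$, contradicting $\gcd(\alpha_1', \alpha_2) = 1$ together with $\alpha_2 \geq 2$. Hence $1 \leq k \leq \alpha_2 - 1$, which is exactly $i\alpha_2 + 1 \leq \alpha_1' \leq (i+1)\alpha_2 - 1$. The only delicate point is invoking the gcd constraint to eliminate the extreme values of $k$; the remaining inequality work is routine.
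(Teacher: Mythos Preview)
Your proof is correct and follows essentially the same route as the paper's: after reducing $(S_1)$ to $\alpha_2-\alpha_1'\mid q-1$ and $\alpha_2 q-\alpha_1' p\mid p-1$, the paper also bounds $\alpha_1'-i\alpha_2$ between $0$ and $\alpha_2$ using $|\alpha_2 q-\alpha_1' p|\le p-1$, and then, for $\alpha_2\ge 2$, excludes the two endpoints via the observation $\gcd(\alpha_1',\alpha_2)=1$. Your explicit introduction of $k=\alpha_1'-i\alpha_2$ and the slightly different inequality bookkeeping do not change the substance of the argument.
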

\begin{proof}

  Let  $\alpha\in\mathbb{Q}$-$\mathcal{KS}(N)$ be such that $\gcd(\alpha_{1},q)=1$, $\alpha_{1}=\alpha_{1}^{'}p$ and $q=ip+s$. Then, $(S_{1})$ is equivalent to
  \begin{IEEEeqnarray}{rlCl}
& \alpha_{2}-\alpha_{1}^{'} & \mid & ip+s-1
\\*[-0.625\normalbaselineskip]
\smash{(S_{9})\hspace{1cm}\left\{
\IEEEstrut[5\jot]
\right.} \nonumber
\\*[-0.625\normalbaselineskip]
& (i\alpha_{2}-\alpha_{1}^{'})p+ \alpha_{2}s & \mid & p-1 \label{eq54}
\end{IEEEeqnarray}

We claim that \begin{equation}\label{eq55}
i\alpha_{2}-\alpha_{1}^{'}\leq0. \end{equation} Indeed,  if is not the case, we get $(i\alpha_{2}-\alpha_{1}^{'})p+ \alpha_{2}s\geq p+\alpha_{2}s>p$, which contradicts $\eqref{eq54}$. Also, we claim that \begin{equation}\label{eq56}
\alpha_{1}^{'}-(i+1)\alpha_{2}\leq 0.\end{equation} Indeed, if is not the case, we obtain  \begin{equation*}\label{eq57}
(\alpha_{1}^{'}-i\alpha_{2})p- \alpha_{2}s=(\alpha_{1}^{'}-(i+1)\alpha_{2})p+\alpha_{2}(p-s)\geq p+\alpha_{2}(p-s)>p,\end{equation*}

which contradicts $\eqref{eq54}$. Now, we consider the two following cases.
\begin{enumerate}
\item If $\alpha_{2}=1$, then by $\eqref{eq55}$ and $\eqref{eq56}$ we get
\begin{equation*}\label{eq58}
i\leq\alpha_{1}^{'}\leq i+1\end{equation*}

\item  Assume that $\alpha_{2}\geq2$. We claim that $i\alpha_{2}-\alpha_{1}^{'}\neq0$. Indeed if not, then  $i\alpha_{2}=\alpha_{1}^{'}$. Hence $1=\gcd(\alpha_{1},\alpha_{2})=\gcd(\alpha_{1}^{'},\alpha_{2})=\alpha_{2}\geq2$, which is impossible. Therefore by $\eqref{eq55}$ we get  \begin{equation}\label{eq59}
    i\alpha_{2}+1\leq\alpha_{1}^{'}.\end{equation}

    Similarly, we claim that $\alpha_{1}^{'}-(i+1)\alpha_{2}\neq0$.
 Hence by $\eqref{eq56}$  we obtain \begin{equation}\label{eq60}
 \alpha_{1}^{'}\leq(i+1)\alpha_{2}-1.\end{equation}

Combining $\eqref{eq59}$ and $\eqref{eq60}$,  we get \begin{equation}\label{eq61}
i\alpha_{2}+1\leq\alpha_{1}^{'}\leq(i+1)\alpha_{2}-1.\end{equation}

\end{enumerate}
\end{proof}

\begin{lemma}\label{encadr14}
Let $\alpha=\dfrac{\alpha_{1}}{\alpha_{2}}\in\mathbb{Q}$ be such that $\gcd(\alpha_{1},N)=p$. Assume that
$\alpha_{1}=\alpha_{1}^{'}p$. Then, $\alpha$ is a $K_{N}$-base if and only if there exist $\varepsilon\in\{-1,1\}$, $d_{p}\in Div(p-1)$ and $d_{q}\in Div(q-1)$  such that $\alpha_{1}^{'}=\dfrac{qd_{q}+\varepsilon d_{p}}{q-p}$ and $\alpha_{2}=\dfrac{pd_{q}+\varepsilon d_{p}}{q-p}$.
\end{lemma}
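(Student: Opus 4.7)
The plan is to specialize the divisibility system $(S_1)$ to the hypothesis $\alpha_1=\alpha_1'p$, extract two clean divisibility relations, and then use Proposition~\ref{encadr13} to fix the sign in one of them so that only one parameter $\varepsilon$ remains.

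First I would substitute $\alpha_1=\alpha_1'p$ into $(S_1)$. The first congruence becomes $p(\alpha_2-\alpha_1')\mid p(q-1)$, which reduces immediately to $\alpha_2-\alpha_1'\mid q-1$. For the second, I need to show $\gcd(\alpha_2q-\alpha_1'p,q)=1$: since $\gcd(\alpha_1,N)=p$ we have $q\nmid\alpha_1$, hence $q\nmid\alpha_1'$, and since $\gcd(p,q)=1$ we get $\gcd(\alpha_1'p,q)=1$, so $\gcd(\alpha_2q-\alpha_1'p,q)=\gcd(\alpha_1'p,q)=1$. Thus the second congruence simplifies to $\alpha_2q-\alpha_1'p\mid p-1$. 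One also checks $\alpha_2\neq\alpha_1'$ (otherwise $\alpha_2p-\alpha_1=0$) and $\alpha_2q\neq\alpha_1'p$ (else $p\mid\alpha_2$, contradicting $\gcd(\alpha_1,\alpha_2)=1$).

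The main obstacle is to pin down the sign of $\alpha_2-\alpha_1'$ so that the characterization uses a single $\varepsilon$ rather than two independent signs. Here I would invoke Proposition~\ref{encadr13}: in case $\alpha_2=1$ we have $\alpha_1'\in\{i,i+1\}\geq i\geq 1$, and since $\alpha_1'\neq\alpha_2$ we conclude $\alpha_1'>\alpha_2$; in case $\alpha_2\geq 2$ we have $\alpha_1'\geq i\alpha_2+1>\alpha_2$. So in all cases $\alpha_1'-\alpha_2>0$, and one may write $\alpha_1'-\alpha_2=d_q$ with $d_q\in Div(q-1)$, and $\alpha_2q-\alpha_1'p=\varepsilon d_p$ with $\varepsilon\in\{-1,1\}$ and $d_p\in Div(p-1)$. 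Solving this $2\times 2$ linear system (multiplying the first equation by $p$ and subtracting) yields $\alpha_2(q-p)=pd_q+\varepsilon d_p$, giving $\alpha_2=\dfrac{pd_q+\varepsilon d_p}{q-p}$, and then $\alpha_1'=\alpha_2+d_q=\dfrac{qd_q+\varepsilon d_p}{q-p}$.

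For the converse I would simply plug the two formulas back in and verify: a direct computation shows $\alpha_2-\alpha_1'=-d_q$ and $\alpha_2q-\alpha_1'p=\varepsilon d_p$, so $\alpha_2p-\alpha_1=-pd_q$ divides $p(q-1)$ because $d_q\mid q-1$, and $\alpha_2q-\alpha_1=\varepsilon d_p$ divides $q(p-1)$ because $d_p\mid p-1$. This verifies both conditions of $(S_1)$ and completes the equivalence. The routine remaining items (that $\alpha_1',\alpha_2$ come out as positive integers under the stated hypotheses) are built into the formulas through the requirement $d_p,d_q\geq 1$ together with Proposition~\ref{encadr13}.
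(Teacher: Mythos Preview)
Your argument is correct and follows essentially the same strategy as the paper: reduce $(S_1)$ to the two relations $\alpha_2-\alpha_1'\mid q-1$ and $\alpha_2 q-\alpha_1'p\mid p-1$, invoke Proposition~\ref{encadr13} to force $\alpha_1'-\alpha_2>0$, and solve the resulting linear system. The only cosmetic difference is that the paper introduces the auxiliary remainder $\alpha_3$ via $\alpha_1'=i\alpha_2+\alpha_3$ before solving, whereas you work directly with $\alpha_1'$ and $\alpha_2$; since $(i-1)\alpha_2+\alpha_3=\alpha_1'-\alpha_2$ and $\alpha_2 s-\alpha_3 p=\alpha_2 q-\alpha_1' p$, the two computations are literally the same system in different notation, and your version is arguably the cleaner of the two.
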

\begin{proof}
 Let  $\alpha\in\mathbb{Q}$-$\mathcal{KS}(N)$ be such that $\gcd(\alpha_{1},q)=1$, $\alpha_{1}=\alpha_{1}^{'}p$ and $q=ip+s$. As  by $\eqref{eq61}$ $\alpha_{1}^{'}=i\alpha_{2}+\alpha_{3}$ with $1\leq\alpha_{3}<\alpha_{2}$ for $\alpha_{2}\geq 2$, (resp. with $0\leq\alpha_{3}\leq\alpha_{2}$ for $\alpha_{2}=1$),  $(S_{9})$ becomes
 \begin{IEEEeqnarray}{rlCl}
& (i-1)\alpha_{2}+\alpha_{3} & \mid & ip+s-1=q-1
\\*[-0.625\normalbaselineskip]
\smash{\left\{
\IEEEstrut[5\jot]
\right.} \nonumber
\\*[-0.625\normalbaselineskip]
& \alpha_{3}p-\alpha_{2}s & \mid & p-1
\end{IEEEeqnarray}

 It follows that, $\alpha$ is a $K_{N}$-base if and only if there exist  $\varepsilon\in\{-1,1\}$,  $d_{p}\in Div(p-1)$ and $d_{q}\in Div(q-1)$  such that
  $$ \left\{\begin{array}{rrl}
 (i-1)\alpha_{2}+\alpha_{3}  & =& d_{q}\\
   \alpha_{2}s-\alpha_{3}p   & =& \varepsilon d_{p}\\
   \end{array}
  \right. $$
 This is equivalent to $\alpha_{2}=\dfrac{pd_{q}+\varepsilon d_{p}}{q-p}$,  $\alpha_{3}=\dfrac{sd_{q}+\varepsilon(1-i)d_{p}}{q-p}$ and so $\alpha_{1}^{'}=\dfrac{qd_{q}+\varepsilon d_{p}}{q-p}$.

\end{proof}
In the rest of this section, we will discuss the case where $pq\mid\alpha_{1}$.
\begin{proposition}\label{encadr15}
Let $\alpha=\dfrac{\alpha_{1}}{\alpha_{2}}\in\mathbb{Q}$-$\mathcal{KS}(N)$ be such that $\alpha_{1}=\alpha_{1}^{''}pq$. Then the following assertions hold.

   \begin{enumerate}
       \item $\alpha_{1}^{''}<\alpha_{2}$.
       \item $q\leq4p-3$.
   \end{enumerate}

\end{proposition}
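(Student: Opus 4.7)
The plan is to specialize system $(S_1)$ to $\alpha_1 = \alpha_1''pq$. Cancelling the common factors $p$ and $q$ respectively on both sides of each divisibility reduces the Korselt condition to the compact pair
\[
\alpha_2 - \alpha_1''q \mid q-1 \qquad\text{and}\qquad \alpha_2 - \alpha_1''p \mid p-1.
\]
A short preliminary step verifies that $\alpha_1'' \ge 1$ (by sign considerations on the second divisibility: a negative $\alpha_1''$ would make $\alpha_2 - \alpha_1''p \ge \alpha_2 + p \ge p+2 > p-1$, using $\alpha_2 \ge 2$, while $\alpha_1'' = 0$ gives $\alpha = 0$), and that neither left-hand side vanishes (otherwise $p \mid \alpha_2$ or $q \mid \alpha_2$, contradicting $\gcd(\alpha_1,\alpha_2)=1$ combined with $pq \mid \alpha_1$).

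For assertion (1), I would assume, for contradiction, that $\alpha_1'' \ge \alpha_2$. Since $q \ge 2$ and $\alpha_2 \ge 2$, one then has $\alpha_1''q > \alpha_2$, so the first divisibility forces $\alpha_1''q - \alpha_2 \le q-1$. On the other hand, the hypothesis gives $\alpha_1''q - \alpha_2 \ge \alpha_2(q-1)$, so the two together collapse to $\alpha_2 \le 1$, contradicting $\alpha_2 \ge 2$.

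For assertion (2), I would split on the size of $\alpha_1''$. When $\alpha_1'' \ge 2$, the triangle inequality
\[
\alpha_1''(q-p) = |(\alpha_2-\alpha_1''p)-(\alpha_2-\alpha_1''q)| \le (p-1)+(q-1)
\]
delivers $q \le 3p-2 \le 4p-3$ at once. The main obstacle lies in the residual case $\alpha_1'' = 1$, where this inequality degenerates to something vacuous. The key trick there is to notice that $\alpha_2 - q$ also divides $(\alpha_2-q)+(q-1) = \alpha_2-1$, a quantity which is positive because $\alpha_2 \ge 2$. Combined with the bound $\alpha_2 \le 2p-1$ extracted from $|\alpha_2 - p| \le p-1$, a short case split on the sign of $\alpha_2-q$ produces $q \le 2\alpha_2-1 \le 4p-3$ in every subcase, closing the argument.
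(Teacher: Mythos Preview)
Your proof is correct. For part~(1) your contradiction argument is a minor streamlining of the paper's, which instead splits off $\alpha_1''=1$ and appeals to $\alpha\neq N$. The real divergence is in part~(2). The paper avoids any case split on $\alpha_1''$: it multiplies the first divisibility by $\alpha_1''$ to obtain $\alpha_1'' q-\alpha_2\mid \alpha_2-\alpha_1''$, which together with part~(1) gives $\dfrac{(q+1)\alpha_1''}{2}\le\alpha_2$; pairing this with $\alpha_2\le(\alpha_1''+1)p-1$ from the second divisibility yields $\alpha_1''(q-2p+1)\le 2(p-1)$ and hence $q\le 4p-3$ uniformly in $\alpha_1''\ge 1$. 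Your route is more modular: the triangle inequality disposes of $\alpha_1''\ge 2$ with the sharper bound $q\le 3p-2$, while your $\alpha_1''=1$ argument is exactly the paper's trick specialized (your $\alpha_2-q\mid\alpha_2-1$ is the $\alpha_1''=1$ instance of $\alpha_1'' q-\alpha_2\mid\alpha_2-\alpha_1''$). The paper's version is cleaner as a single computation; yours has the small advantage of isolating where the bound $4p-3$ is actually tight, namely only at $\alpha_1''=1$.
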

\begin{proof}

 Let $\alpha=\dfrac{\alpha_{1}}{\alpha_{2}}\in\mathbb{Q}$-$\mathcal{KS}(N)$ be such that  $\alpha_{1}=\alpha_{1}^{''}pq$. Then $(S_{1})$ becomes

$$ \left\{\begin{array}{rrl}
    \alpha_{2}p-\alpha_{1}^{''}pq=\alpha_{2}p-\alpha_{1}   & \mid & p(q-1) \\
     \alpha_{2}q-\alpha_{1}^{''}pq=\alpha_{2}q-\alpha_{1} & \mid & q(p-1)\\
   \end{array}
  \right. $$

which is equivalent to
\begin{IEEEeqnarray}{rlCl}
& \alpha_{1}^{''}q -\alpha_{2}& \mid & q-1 \label{eq64}
\\*[-0.625\normalbaselineskip]
\smash{(S_{10})\hspace{1cm}\left\{
\IEEEstrut[5\jot]
\right.} \nonumber
\\*[-0.625\normalbaselineskip]
& \alpha_{1}^{''}p -\alpha_{2} & \mid & p-1 \label{eq65}
\end{IEEEeqnarray}

\begin{enumerate}
  \item By $\eqref{eq64}$ we have $ \alpha_{1}^{''}q -\alpha_{2}\leq q-1$. As in addition $q>1$, it follows that $\alpha_{1}^{''}-1<(\alpha_{1}^{''}-1)q \leq\alpha_{2}-1$ for $\alpha_{1}^{''}>1$ which implies that  $\alpha_{1}^{''}<\alpha_{2}$. If $\alpha_{1}^{''}=1$, then since $ pq\neq\alpha=\dfrac{pq}{\alpha_{2}}$ with $\alpha_{2}\geq 1$ we get $\alpha_{2}>1=\alpha_{1}^{''}$.
  \item By $(S_{10})$ we can write
  $$ \left\{\begin{array}{rrl}
   \alpha_{1}^{''}q -\alpha_{2}& \mid & \alpha_{1}^{''}(q-1)=\alpha_{1}^{''}q -\alpha_{2}+\alpha_{2}-\alpha_{1}^{''}\\
   \alpha_{1}^{''}p -\alpha_{2} & \mid & p-1\\
   \end{array}
  \right. $$
  hence

  $$ \left\{\begin{array}{rrl}
  \alpha_{1}^{''}q -\alpha_{2}& \mid &\alpha_{2}-\alpha_{1}^{''}\\
   \alpha_{1}^{''}p-\alpha_{2} & \mid &p-1\\
   \end{array}
  \right. $$

  This implies that

   $$ \left\{\begin{array}{rrl}
  \alpha_{1}^{''}q -\alpha_{2}& \leq &\alpha_{2}-\alpha_{1}^{''}\\
   -p+1 & \leq &\alpha_{1}^{''}p-\alpha_{2}\\
   \end{array}
  \right. $$
therefore
\begin{IEEEeqnarray}{rlCl}
& \dfrac{(q+1)\alpha_{1}^{''}}{2}& \leq &\alpha_{2} \label{eq66}
\\*[-0.625\normalbaselineskip]
\smash{\left\{
\IEEEstrut[5\jot]
\right.} \nonumber
\\*[-0.625\normalbaselineskip]
& \alpha_{2} & \leq &(\alpha_{1}^{''}+1)p-1 \label{eq67}
\end{IEEEeqnarray}

So, by combining $\eqref{eq66}$ and $\eqref{eq67}$, we get $$\dfrac{(q+1)\alpha_{1}^{''}}{2}\leq\alpha_{2}\leq(\alpha_{1}^{''}+1)p-1.$$
Hence $\alpha_{1}^{''}(q-2p+1)\leq2(p-1)$. As in addition  $\alpha_{1}^{''}\geq 1$, we get $q-2p+1\leq\alpha_{1}^{''}(q-2p+1)\leq2(p-1)$.
Thus $q\leq4p-3$.

\end{enumerate}

\end{proof}

\begin{lemma}\label{encadr16}
Let $\alpha=\dfrac{\alpha_{1}}{\alpha_{2}}\in\mathbb{Q}$ be such that $\alpha_{1}=\alpha_{1}^{''}pq$. Suppose  that

$2p<q<3p$. Then, $\alpha$ is a $K_{N}$-base if and only if one of the following assertions is verified.
\begin{enumerate}
\item $q=\dfrac{9p-5}{4}$ or $q=3p-2$, $\alpha_{1}^{''}=1$ and $\alpha_{2}=\dfrac{3p-1}{2}$.
\item $s+1\mid q-1$, $\alpha_{1}^{''}=1$ and $\alpha_{2}=2p-1$.
\item $\dfrac{s+1}{2}\mid p-1$, $\alpha_{1}^{''}=1$ and $\alpha_{2}=\dfrac{q+1}{2}$.
\end{enumerate}
 \end{lemma}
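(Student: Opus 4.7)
The plan is to adapt the template of Lemma~\ref{encadr11} to the range $2p<q<3p$ (so $i=2$ and $q=2p+s$), by first forcing $\alpha_1''=1$ and then classifying all solutions of the system $(S_{10})$.

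First I would eliminate $\alpha_1''\geq 2$. From $\eqref{eq66}$ and $\eqref{eq67}$, the constraint $\alpha_1''q-\alpha_2\leq q-1$ (which holds since $\alpha_1''q\geq 2\alpha_1''p\geq(\alpha_1''+1)p>\alpha_2$) combined with $\alpha_2\leq(\alpha_1''+1)p-1$ yields $(\alpha_1''-1)q\leq(\alpha_1''+1)p-2$; since $q>2p$ this forces $\alpha_1''<3-2/p$, hence $\alpha_1''\in\{1,2\}$. To rule out $\alpha_1''=2$, I would write $\alpha_2=2p+d_p''$ with $d_p''\mid p-1$ (from $\alpha_2>2p$ and $2p-\alpha_2\mid p-1$) and $d_p''\geq s+1$ (from $\alpha_2\geq q+1$). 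Using $(2p+s-1)-(2p+2s-d_p'')=d_p''-s-1$, the divisibility $(2p+2s-d_p'')\mid(2p+s-1)$ becomes $(2p+2s-d_p'')\mid(d_p''-s-1)$; the sole candidate is $d_p''=s+1$ (otherwise $d_p''\geq p+(3s+1)/2>p-1$, impossible), producing $\alpha_2=q+1$. Since $q$ is odd, $q+1$ is even and $\gcd(2pq,q+1)\geq 2$, contradicting $\gcd(\alpha_1,\alpha_2)=1$.

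For $\alpha_1''=1$, the system reduces to $q-\alpha_2\mid q-1$ and $p-\alpha_2\mid p-1$, with $\alpha_2\geq(q+1)/2>p$. Thus $\alpha_2=p+d_p$ for some positive divisor $d_p$ of $p-1$. Setting $D=q-\alpha_2=q-p-d_p>0$ and noting $q-1=D+(\alpha_2-1)$, the condition $D\mid q-1$ is equivalent to $D\mid\alpha_2-1$, i.e., $q-1=k'D$ for an integer $k'\geq 2$. This yields the parametrization $\alpha_2=((k'-1)q+1)/k'$ and $d_p=((k'-2)p+(k'-1)s+1)/k'$, subject to $(k'-1)(s+1)\leq 2(p-1)$. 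Writing $d_p=(p-1)/j$ produces the master equation $p[(1-j)k'+2j]=j(k'-1)s+k'+j$, which I would analyze case by case in $k'$.

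The main obstacle is the case analysis on $k'$. For $k'=2$: $d_p=(s+1)/2$, yielding assertion (3) under $(s+1)/2\mid p-1$. For $k'=3$: the master equation reduces to $p(3-j)=j(2s+1)+3$, admissible only for $j\in\{1,2\}$; the case $j=1$ gives $q=3p-2$ with $d_p=p-1$ (subsumed by assertion (2)), while $j=2$ gives $p=4s+5$, i.e., $q=(9p-5)/4$, with $d_p=(p-1)/2$ and $\alpha_2=(3p-1)/2$. Together with the $q=3p-2$ branch, where a $k'=2$ solution with $s=p-2$ also gives $\alpha_2=(3p-1)/2=(q+1)/2$, this assembles assertion (1). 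For $k'\geq 4$: the bracket $(1-j)k'+2j$ is nonpositive for $j\geq 2$ while the right-hand side is positive, forcing $j=1$, whence $d_p=p-1$, $\alpha_2=2p-1$, and $(s+1)\mid q-1$: assertion (2). The converse in each assertion is a direct substitution in $(S_{10})$. The delicate point is to handle the overlap between assertion (1) at $q=3p-2$ and assertion (3) at $s=p-2$, and to check cleanly that no further regime of $k'$ produces new solutions.
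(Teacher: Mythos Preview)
Your proposal is correct and follows essentially the same route as the paper: reduce $(S_{10})$ to force $\alpha_1''=1$, establish $p<\alpha_2\leq 2p-1$, write $\alpha_2=p+d_p$ with $d_p\mid p-1$ and $q-\alpha_2=d_q\mid q-1$, and then classify the finitely many possibilities. The only organizational difference is in the final classification: the paper uses the relation $d_p+d_q=q-p$ to argue that at least one of $d_p,d_q$ must lie in $\{p-1,(p-1)/2\}$ or $\{(q-1)/2\}$ (both being $\leq\tfrac13$ of their respective moduli would make the sum too small), and then branches on which divisor is large; you instead set $k'=(q-1)/d_q$, $j=(p-1)/d_p$, derive the single master equation $p[(1-j)k'+2j]=j(k'-1)s+k'+j$, and branch on $k'\in\{2,3,\geq4\}$. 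Both partitions cover the same solution set and produce the same three assertions; your master-equation formulation is perhaps slightly more mechanical, while the paper's sum argument is a bit more conceptual, but neither buys anything the other does not.
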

\begin{proof}

 Suppose that $\alpha\in\mathbb{Q}$-$\mathcal{KS}(N)$ .  Then by $\eqref{eq64}$ and $\eqref{eq65}$ we get respectively
     \begin{equation}\label{eq68}
     \alpha_{1}^{''}q -\alpha_{2}\leq q-1 \end{equation} and
     \begin{equation}\label{eq69}
     -\alpha_{1}^{''}p +\alpha_{2}\leq p-1. \end{equation}

    Adding $\eqref{eq68}$ to $\eqref{eq69}$, we obtain \begin{equation}\label{eq70}
    \alpha_{1}^{''}(q-p)\leq q+p-2.  \end{equation}
   Since $q=2p+s$, it follows that \begin{equation*}\label{eq71}
   \alpha_{1}^{''}\leq \dfrac{q+p-2}{q-p}=\dfrac{3(p+s)-2s-2}{p+s}<3.\end{equation*}

   Therefore, $\alpha_{1}^{''}\in\{1,2\}$.   Let us show that $\alpha_{1}^{''}=1$.

   Suppose  by contradiction that $\alpha_{1}^{''}=2$. Then by $\eqref{eq68}$ and $\eqref{eq69}$ we get
   $ 2p+s+1=q+1\leq\alpha_{2}\leq 3p-1$. Hence, the Euclidean division of $\alpha_{2}$ by $p$ is $ \alpha_{2}=2p+u $ with $s+1\leq u\leq p-1$.

    As by  $\eqref{eq64}$ we have $2(2q -\alpha_{2})-(q-1)=2(p-u)+3s+1>0$ and $(2q -\alpha_{2})\mid (q-1)$, then we must have $2q -\alpha_{2}=q-1$
that is $\alpha_{2}=q+1$.  But, $2\mid\gcd(q+1,2)=\gcd(\alpha_{2},\alpha_{1}^{''})=1$, which is not true.  So, $\alpha_{1}^{''}=1$.

  Now, let us show that $p<\alpha_{2}\leq 2p-1$. As  $\alpha_{1}^{''}=1$, $(S_{10})$ becomes
  \begin{IEEEeqnarray}{rlCl}
& q -\alpha_{2}& \mid & q-1 \label{eq73}
\\*[-0.625\normalbaselineskip]
\smash{(S_{11})\hspace{1cm}\left\{
\IEEEstrut[5\jot]
\right.} \nonumber
\\*[-0.625\normalbaselineskip]
& \alpha_{2}-p & \mid & p-1 \label{eq74}
\end{IEEEeqnarray}

By $\eqref{eq74}$, it follows immediately that $\alpha_{2}\leq 2p-1$. Hence, we can write $\alpha_{2}= kp+u$ with $k\in\{0,1\}$ and $0<u\leq p-1$.

We claim that $k\neq0$ (i.e. $k=1$ and so $p<\alpha_{2}$). Indeed, if not, then  $\alpha_{2}= u$ and as $2u<2p<q$, we get
\begin{equation*}\label{eq75}
 2(q -\alpha_{2})=q+(q-2u)\geq q+1>q-1.\end{equation*}
Therefore, by $\eqref{eq73}$ we must have $q -\alpha_{2}=q-1$, so that $\alpha_{2}=1$. This implies that $\alpha=pq=N$, which is not possible  by definition.
Thus, \begin{equation}\label{eq76}
p<\alpha_{2}\leq 2p-1<q  \end{equation}

Since  $q -\alpha_{2}>0$ and $\alpha_{2}-p>0$ by $\eqref{eq76}$  it follows by $(S_{11})$ that $\alpha$  is a $K_{N}$-base if and only if   there exist  $d_{p}\in Div(p-1)$ and $d_{q}\in Div(q-1)$ such that
\begin{IEEEeqnarray}{rlCl}
& q -\alpha_{2}& = & d_{q} \label{eq77}
\\*[-0.625\normalbaselineskip]
\smash{\left\{
\IEEEstrut[5\jot]
\right.} \nonumber
\\*[-0.625\normalbaselineskip]
& \alpha_{2}-p & = & d_{p} \label{eq78}
\end{IEEEeqnarray}

Let us show that   $d_{p}=\dfrac{p-1}{k_{1}}$ and $d_{q}=\dfrac{q-1}{k_{2}}$ with $(k_{1}, k_{2})\neq (1,1)$ and ($k_{1}\in \{1,2\}$ or $k_{2}\in \{1,2\}$).

Adding $\eqref{eq77}$ to $\eqref{eq78}$, we get \begin{equation}\label{eq79}
q-p=d_{q}+d_{p}.\end{equation}

\begin{itemize}
  \item If $d_{p}=p-1$ and $d_{q}=q-1$, then by $\eqref{eq79}$ we obtain \begin{equation*}\label{eq80}
   p+s=q-p=d_{q}+d_{p}=q+p-2=3p+s-2.\end{equation*} Hence $p=1$, which is not possible.
  \item Now, assume that $d_{p}\leq\dfrac{p-1}{3}$ and $d_{q}\leq\dfrac{q-1}{3}$. Then by $\eqref{eq79}$
  $p+s=q-p=d_{q}+d_{p}\leq\dfrac{p+q-2}{3}$. Hence  $2s\leq-2$, which is not true.

\end{itemize}

So, we conclude that $d_{p}\in\{p-1, \dfrac{p-1}{2}\}$ or $d_{q}\in\{q-1, \dfrac{q-1}{2}\}$. First, we claim that $d_{q}\neq\ q-1$, indeed if $d_{q}=\ q-1$ it follows by $\eqref{eq77}$ and $\eqref{eq78}$ that $\alpha_{2}=1$ and so $\alpha=pq=N$ which is not possible.
 This yields to consider the  following cases:

\begin{enumerate}
  \item If $d_{p}=\dfrac{p-1}{2}$, then by $\eqref{eq78}$ we have $\alpha_{2}=\dfrac{3p-1}{2}$. Therefore $q-\alpha_{2}=\dfrac{p+2s+1}{2}\mid q-1=2p+s-1$ by $\eqref{eq77}$. Knowing that $4(q-\alpha_{2})-(q-1)=3s+3>0$ and $q-\alpha_{2}-(q-1)=1-\alpha_{2}<0$, it follows that $q-1=k(q-\alpha_{2})$ with $k\in\{2,3\}$ .  This implies that $k=2$ (i.e. $d_{q}=\dfrac{q-1}{2}$)  hence $s=p-2$ and $q=3p-2$ or $k=3$ (i.e. $d_{q}=\dfrac{q-1}{3}$) which gives $s=\dfrac{p-5}{4}$ and  $q=\dfrac{9p-5}{4}$.
 \item Suppose that $d_{p}=p-1$. Then by $\eqref{eq78}$ we obtain $\alpha_{2}=2p-1$, this yields by $\eqref{eq77}$  that
    $q-\alpha_{2}=s+1\mid q-1$.
    \item Now, assume that $d_{q}=\dfrac{q-1}{2}$. Then by $\eqref{eq77}$ we obtain $\alpha_{2}=\dfrac{q+1}{2}$, this yields by $\eqref{eq78}$  that
    $\alpha_{2}-p=\dfrac{s+1}{2}\mid p-1$.
\end{enumerate}

  This ends the forward direction of the proposition. The backward direction is clearly satisfied.

\end{proof}

\begin{lemma}\label{encadr17}
Let $\alpha=\dfrac{\alpha_{1}}{\alpha_{2}}\in\mathbb{Q}$ be such that $\alpha_{1}=\alpha_{1}^{''}pq$. Suppose that

$3p<q<4p$. Then, $\alpha$ is a $K_{N}$-base if and only if  $q=4p-3$, $\alpha_{1}^{''}=1$ and $\alpha_{2}=2p-1$.
\end{lemma}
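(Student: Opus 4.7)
The plan is to mirror the strategy used in Lemma~\ref{encadr16}, but with $i=3$, exploiting the tighter constraint $q\leq 4p-3$ coming from Proposition~\ref{encadr15}(2). Since $3p<q<4p$ we have $q=3p+s$ with $1\leq s\leq p-3$, and I will work with the system $(S_{10})$ (i.e.\ $\alpha_1''q-\alpha_2\mid q-1$ and $\alpha_1''p-\alpha_2\mid p-1$) established in the proof of Proposition~\ref{encadr15}. The first step is to pin down $\alpha_1''$: from inequality~\eqref{eq70} we have $\alpha_1''(q-p)=\alpha_1''(2p+s)\leq q+p-2=4p+s-2$. Since $s\geq 1$ this forces $\alpha_1''<2$, hence $\alpha_1''=1$.

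With $\alpha_1''=1$, the system reduces to $q-\alpha_2\mid q-1$ and $\alpha_2-p\mid p-1$. The next step is to eliminate $\alpha_2\leq p$. The case $\alpha_2=p$ is ruled out because it would give $p\mid\gcd(\alpha_1,\alpha_2)=1$. If $\alpha_2<p$, then $q-\alpha_2>q-p=2p+s$, and since $q-\alpha_2\mid q-1$ the quotient $k=(q-1)/(q-\alpha_2)$ is a positive integer; $k=1$ gives $\alpha_2=1$ (excluded since $\alpha=pq=N$ is not allowed), while $k\geq 2$ gives $q-\alpha_2\leq(q-1)/2=(3p+s-1)/2<2p+s$, contradicting $q-\alpha_2>2p+s$. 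Therefore $\alpha_2>p$.

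Writing $\alpha_2=p+d_p$ with $d_p$ a positive divisor of $p-1$ (from the second divisibility condition), I substitute into $q-\alpha_2\mid q-1$ to obtain $2p+s-d_p\mid 3p+s-1$. Using $3p+s-1=(2p+s-d_p)+(p+d_p-1)$, this is equivalent to $2p+s-d_p\mid p+d_p-1$. Since both sides are positive integers, divisibility demands $p+d_p-1\geq 2p+s-d_p$, i.e.\ $d_p\geq(p+s+1)/2$. Moreover, $p+d_p-1\leq 2p-2$ and $2p+s-d_p\geq p+s+1$, so the quotient is at most $(2p-2)/(p+s+1)<2$, forcing it to equal $1$; hence $d_p=(p+s+1)/2$. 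Since $d_p>(p-1)/2$, the only divisor of $p-1$ satisfying this is $d_p=p-1$, which yields $s=p-3$, thus $q=4p-3$ and $\alpha_2=p+(p-1)=2p-1$. The converse is a direct verification: with $q=4p-3$, $\alpha_1=pq$, $\alpha_2=2p-1$, one checks $q-\alpha_2=2p-2=2(p-1)\mid q-1=4p-4$ and $\alpha_2-p=p-1\mid p-1$, and $\gcd(pq,2p-1)=1$.

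The main obstacle is the bookkeeping in the third paragraph: reducing the divisibility $q-\alpha_2\mid q-1$ to a condition purely on $d_p$ and $s$, then pinning down the quotient to $1$ and finally using the structure of divisors of $p-1$ to isolate the unique admissible value $d_p=p-1$. Once this is in place, everything collapses to the stated parametrization.
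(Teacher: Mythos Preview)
Your proof is correct and follows essentially the same route as the paper: use \eqref{eq70} to force $\alpha_1''=1$, show $p<\alpha_2\leq 2p-1$, write $\alpha_2=p+u$ with $u\mid p-1$, reduce the divisibility $q-\alpha_2\mid q-1$ to pin down $u=(p+s+1)/2$, and conclude $u=p-1$ since $u>(p-1)/2$. The only cosmetic differences are that you eliminate $\alpha_2\leq p$ by a direct case analysis (the paper instead cites the analogous argument from Lemma~\ref{encadr16}), and you subtract once to rewrite $2p+s-d_p\mid 3p+s-1$ as $2p+s-d_p\mid p+d_p-1$ before bounding the quotient, whereas the paper bounds the quotient of the original pair directly; both lead to the same equation $2d_p=p+s+1$.
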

\begin{proof}

 Suppose that $\alpha\in\mathbb{Q}$-$\mathcal{KS}(N)$, $\alpha_{1}=\alpha_{1}^{''}pq$ and $3p<q<4p$.

 Let us show that $\alpha_{1}^{''}=1$.  As $q=3p+s$ we get by $\eqref{eq70}$
  \begin{equation*}\label{eq85}
  \alpha_{1}^{''}\leq \dfrac{q+p-2}{q-p}=\frac{2(2p+s)-s-2}{2p+s}<2. \end{equation*}
    Thus, $\alpha_{1}^{''}=1$.

  Now, since $\alpha_{1}^{''}=1$ and with the same proof of $\eqref{eq76}$, we may write  \begin{equation*}\label{eq86}
  p<\alpha_{2}=p+u\leq 2p-1<q.\end{equation*}
  Hence,  $(S_{10})$ becomes
\begin{IEEEeqnarray}{rlCl}
& 2p+s-u=q -\alpha_{2}& \mid & q-1=3p+s-1 \label{eq87}
\\*[-0.625\normalbaselineskip]
\smash{\left\{
\IEEEstrut[5\jot]
\right.} \nonumber
\\*[-0.625\normalbaselineskip]
& u=\alpha_{2}-p & \mid & p-1 \label{eq89}
\end{IEEEeqnarray}

As $3(2p+s-u)=3p+s-1+3(p-u)+2s+1>3p+s-1$ and $2p+s-u<3p+s-1$, it follows by $\eqref{eq87}$ that
$2(2p+s-u)=3p+s-1$. Hence, $u=\dfrac{p+s+1}{2}$.

By $\eqref{eq89}$, we have $ u=\dfrac{p+s+1}{2}\mid (p-1)$. Since, in addition, $2u=p+s+1>  p-1$, it follows that $\dfrac{p+s+1}{2}=p-1$. Hence $s=p-3$; so that $q=4p-3$ and $\alpha_{2}=2p-1$.

 This ends the forward direction of the proposition.  The backward direction is clearly satisfied.
 \end{proof}
 Now, for the  next two results, we consider $\alpha_{2}=kp+u$  with  $1\leq u\leq p-1$; the Euclidian division of $\alpha_{2}$ by $p$.

\begin{proposition}\label{encadr18}
Let $\alpha=\dfrac{\alpha_{1}}{\alpha_{2}}\in\mathbb{Q}$-$\mathcal{KS}(N)$ be such that $\alpha_{1}=\alpha_{1}^{''}pq$. If $q<2p$, then the following assertions hold.

   \begin{enumerate}
       \item $\alpha_{1}^{''}\leq p+1$.
       \item $\alpha_{1}^{''}\in\{k,k+1\}$.
   \end{enumerate}
\end{proposition}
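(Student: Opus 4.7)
The plan is to exploit the system $(S_{10})$ derived in the proof of Proposition~\ref{encadr15}: since $\alpha_{1}=\alpha_{1}^{''}pq$, the two Korselt divisibilities collapse to
\[ \alpha_{1}^{''}q-\alpha_{2}\mid q-1 \qquad\text{and}\qquad \alpha_{1}^{''}p-\alpha_{2}\mid p-1, \]
which immediately yield the absolute estimates $|\alpha_{1}^{''}q-\alpha_{2}|\leq q-1$ and $|\alpha_{1}^{''}p-\alpha_{2}|\leq p-1$. Both assertions will be extracted from these two estimates, with the second divisibility alone sufficing for part (2).

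For assertion (2), I would substitute $\alpha_{2}=kp+u$ with $1\leq u\leq p-1$ (the constraint $u\geq 1$ being forced by $\gcd(\alpha_{1},\alpha_{2})=1$ together with $p\mid\alpha_{1}$). Then $\alpha_{1}^{''}p-\alpha_{2}=(\alpha_{1}^{''}-k)p-u$ is trapped in $[-(p-1),p-1]$. The upper half gives $(\alpha_{1}^{''}-k)p\leq u+p-1\leq 2p-2<2p$, forcing $\alpha_{1}^{''}-k\leq 1$; the lower half gives $(\alpha_{1}^{''}-k)p\geq u-(p-1)\geq 2-p>-p$, forcing $\alpha_{1}^{''}-k\geq 0$. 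Thus $\alpha_{1}^{''}\in\{k,k+1\}$.

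For assertion (1), I would combine the two divisibilities through the triangle inequality applied to
\[ \alpha_{1}^{''}(q-p)=(\alpha_{1}^{''}q-\alpha_{2})-(\alpha_{1}^{''}p-\alpha_{2}), \]
which, since $\alpha_{1}^{''}\geq 1$ and $q>p$, yields $\alpha_{1}^{''}(q-p)\leq(q-1)+(p-1)=q+p-2$. Setting $q=p+s$, this becomes $\alpha_{1}^{''}s\leq 2p+s-2$, i.e.\ $\alpha_{1}^{''}\leq 1+\dfrac{2p-2}{s}$. For $s\geq 2$ this gives $\alpha_{1}^{''}\leq p$, while for $s=1$ the primality of $q=p+1$ forces $(p,q)=(2,3)$, so the bound becomes $\alpha_{1}^{''}\leq 2p-1=p+1$. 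Hence $\alpha_{1}^{''}\leq p+1$ in every case.

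The only mildly delicate point I anticipate is the boundary $s=1$ in part (1), where the generic estimate $1+\dfrac{2p-2}{s}$ is weaker than $p+1$; fortunately the primality of $q$ collapses this edge case to $(p,q)=(2,3)$, where the bound becomes exactly $p+1$ and is tight. Everything else is a routine manipulation of the two absolute-value inequalities coming from $(S_{10})$.
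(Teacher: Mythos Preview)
Your proof is correct and follows essentially the same route as the paper: both parts are read off from the size estimates $|\alpha_{1}^{''}q-\alpha_{2}|\leq q-1$ and $|\alpha_{1}^{''}p-\alpha_{2}|\leq p-1$ coming from $(S_{10})$, and part~(1) in particular reproduces the paper's inequality $\alpha_{1}^{''}(q-p)\leq q+p-2$ with the same $s=1$ versus $s\geq 2$ split. The only notable variation is in part~(2): the paper derives $\alpha_{1}^{''}\leq k+1$ from the $q$-side estimate (substituting $q=p+s$) and $\alpha_{1}^{''}\geq k$ from the $p$-side, whereas you obtain both bounds from $|\alpha_{1}^{''}p-\alpha_{2}|\leq p-1$ alone---a minor simplification that incidentally makes your argument for~(2) independent of the hypothesis $q<2p$.
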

\begin{proof}
Suppose that $\alpha\in\mathbb{Q}$-$\mathcal{KS}(N)$ and $q=p+s$.
\begin{enumerate}
  \item  By $\eqref{eq70}$ we have $ \alpha_{1}^{''}\leq \dfrac{q+p-2}{q-p}=\dfrac{2p+s-2}{s}$. Hence, two cases are to be considered:
\begin{itemize}
  \item If $s=1$ and so $q=3, p=2$, then  $\alpha_{1}^{''}\leq 3=p+1$.
  \item If $s\geq2$, then $\alpha_{1}^{''}\leq \dfrac{2p+s-2}{s}\leq p$.
\end{itemize}
 \item   Substituting $q=p+s$ and $\alpha_{2}=kp+u$  in $\eqref{eq68}$, we get
    $(\alpha_{1}^{''}-k)p+\alpha_{1}^{''}s -u=\alpha_{1}^{''}q -\alpha_{2}\leq q-1=p+s-1$,
  therefore  $(\alpha_{1}^{''}-k-1)p+(\alpha_{1}^{''}-1)s \leq u-1\leq p-2.$ Since, in addition, $\alpha_{1}^{''}\geq 1$ hence $(\alpha_{1}^{''}-1)s\geq0$, it follows that $\alpha_{1}^{''}-k-1\leq 0 $, that is  $\alpha_{1}^{''}\leq k+1 $.

  Also,  by $\eqref{eq69}$ we have  $(\alpha_{1}^{''}-k)p -u=\alpha_{1}^{''}p -\alpha_{2}\geq -p+1$, hence $(\alpha_{1}^{''}-k+1)p\geq u+1\geq 2$. Therefore, $\alpha_{1}^{''}-k+1\geq 1 $, that is  $\alpha_{1}^{''}\geq k.$  So, we deduce that $\alpha_{1}^{''}\in\{k,k+1\}$.
\end{enumerate}

\end{proof}
Now, the next two lemmas deal with the two cases of $\alpha_{1}^{''}\in\{k,k+1\}$.
\begin{lemma}\label{encadr19}
Let $\alpha=\dfrac{\alpha_{1}}{\alpha_{2}}\in\mathbb{Q}$ be such  that $\alpha_{1}=kpq$ and $q<2p$. Then, $\alpha$ is a $K_{N}$-base if and only if  there exist $\varepsilon\in\{-1,1\}$, $d_{p}\in Div(p-1)$ and $d_{q}\in Div(q-1)$ such that $k=\dfrac{d_{p}+\varepsilon d_{q}}{s}>0$ and $\alpha_{2}=\dfrac{qd_{p}+\varepsilon p d_{q}}{s}>0$.

\end{lemma}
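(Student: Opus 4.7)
The plan is to reduce the Korselt conditions, under the hypotheses $\alpha_{1}=kpq$ and $q=p+s<2p$, to a pair of simpler divisibility relations using system $(S_{10})$ from the proof of Proposition~\ref{encadr15}, and then rewrite those relations as equations via the definition of $Div(\cdot)$.

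First I would invoke $(S_{10})$, which is equivalent to $(S_{1})$ for every $\alpha\in\mathbb{Q}$-$\mathcal{KS}(N)$ with $\alpha_{1}=\alpha_{1}^{''}pq$. Setting $\alpha_{1}^{''}=k$ and writing $\alpha_{2}=kp+u$ with $1\le u\le p-1$ (as in the setup preceding Proposition~\ref{encadr18}; note that $u\neq 0$ because $\gcd(\alpha_{1},\alpha_{2})=1$ forces $p\nmid\alpha_{2}$), and substituting $q=p+s$, the first relation $\alpha_{1}^{''}q-\alpha_{2}\mid q-1$ collapses to $ks-u\mid q-1$, while the second $\alpha_{1}^{''}p-\alpha_{2}\mid p-1$ collapses to $u\mid p-1$.

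Next I would translate this pair into the stated existence form. The relation $u\mid p-1$ is precisely $u=d_{p}$ for some $d_{p}\in Div(p-1)$; the relation $ks-u\mid q-1$, together with the fact that $ks-u$ can have either sign, is equivalent to the existence of $d_{q}\in Div(q-1)$ and $\varepsilon\in\{-1,1\}$ with $ks-u=\varepsilon d_{q}$. Solving this two-equation system yields
\[
k=\frac{d_{p}+\varepsilon d_{q}}{s}, \qquad \alpha_{2}=kp+u=\frac{p(d_{p}+\varepsilon d_{q})}{s}+d_{p}=\frac{qd_{p}+\varepsilon p\,d_{q}}{s},
\]
and the positivity $k>0$, $\alpha_{2}>0$ is forced by $k\ge 1$ and $\alpha_{2}\ge 1$.

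For the converse direction, I would take $(d_{p},d_{q},\varepsilon)$ satisfying the stated identities, observe that $u:=d_{p}\in\{1,\ldots,p-1\}$ automatically (since a positive divisor of $p-1$ lies in that range), and verify by direct substitution that the reduced relations $u\mid p-1$ and $ks-u\mid q-1$ hold, hence so does $(S_{10})$, so that $\alpha$ is a $K_{N}$-base. The only mild subtlety in the whole argument is the sign convention for $\varepsilon$ (needed because $ks-u$ may be negative); apart from that, everything is a straightforward substitution building on the reductions already carried out in Propositions~\ref{encadr15} and~\ref{encadr18}.
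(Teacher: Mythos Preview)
Your proposal is correct and follows essentially the same route as the paper: both substitute $\alpha_{2}=kp+u$, $q=p+s$, $\alpha_{1}^{''}=k$ into $(S_{10})$ to obtain the reduced system $ks-u\mid q-1$, $u\mid p-1$, and then parametrize via $u=d_{p}$, $ks-u=\varepsilon d_{q}$ to solve for $k$ and $\alpha_{2}$. Your write-up is simply a bit more explicit about why $u\neq 0$, about the sign convention for $\varepsilon$, and about the converse direction, but there is no substantive difference in method.
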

\begin{proof}
Replacing $\alpha_{2}=kp+u $, $q=p+s$ and $\alpha_{1}=kpq$ in $(S_{10})$, we obtain

$$ \left\{\begin{array}{rrl}
   ks-u & \mid & q-1\\
   -u & \mid & p-1\\
   \end{array}
  \right. $$

 Hence,  $\alpha$ is a $K_{N}$-base if and only if  there exist  $d_{q}\in Div(q-1)$, $d_{p}\in Div(p-1)$ and $\varepsilon\in\{-1,1\}$ such that
  $$ \left\{\begin{array}{rrl}
 ks-u  & =& \varepsilon d_{q}\\
   u & =& d_{p}\\
   \end{array}
  \right. $$
 This is equivalent to  $k=\dfrac{d_{p}+\varepsilon d_{q}}{s}$ and $\alpha_{2}=\dfrac{qd_{p}+\varepsilon p d_{q}}{s}$.

\end{proof}
\begin{lemma}\label{encadr30}
Let $\alpha=\dfrac{\alpha_{1}}{\alpha_{2}}\in\mathbb{Q}$ be such that  $\alpha_{1}=(k+1)pq$ and $q<2p$. Then, $\alpha$ is a $K_{N}$-base if and only if  there exist $d_{p}\in Div(p-1)$ and $d_{q}\in Div(q-1)$ such that $k+1=\dfrac{d_{q}-d_{p}}{s}>0$ and $\alpha_{2}=\dfrac{pd_{q}-qd_{p}}{s}>0$.
\end{lemma}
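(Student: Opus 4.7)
The plan is to mirror the proof of Lemma~\ref{encadr19}, which handled the companion case $\alpha_{1}^{''}=k$. First, I would substitute the hypotheses $\alpha_{1}=(k+1)pq$, $\alpha_{2}=kp+u$ with $1\le u\le p-1$, and $q=p+s$ into the system $(S_{10})$ supplied by Proposition~\ref{encadr15}. The second relation $\alpha_{1}^{''}p-\alpha_{2}\mid p-1$ collapses to $p-u\mid p-1$, and the first $\alpha_{1}^{''}q-\alpha_{2}\mid q-1$ becomes $p+(k+1)s-u\mid p+s-1$. Hence $\alpha$ is a $K_{N}$-base exactly when one can set $d_{p}=p-u\in Div(p-1)$ and $d_{q}=p+(k+1)s-u\in Div(q-1)$.

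Next, I would eliminate $u$ between these two relations. Subtracting them immediately gives $d_{q}-d_{p}=(k+1)s$, i.e.\ $k+1=\dfrac{d_{q}-d_{p}}{s}$. Substituting $u=p-d_{p}$ into $\alpha_{2}=kp+u=(k+1)p-d_{p}$ and replacing $(k+1)p$ by $\dfrac{p(d_{q}-d_{p})}{s}$ yields
\[
\alpha_{2}=\dfrac{p(d_{q}-d_{p})}{s}-d_{p}=\dfrac{pd_{q}-(p+s)d_{p}}{s}=\dfrac{pd_{q}-qd_{p}}{s},
\]
exactly the formula in the statement. The positivity $k+1>0$ is automatic from $k\ge 0$, and $\alpha_{2}>0$ from $\alpha_{2}\ge u\ge 1$.

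For the converse direction, starting from $d_{p}\in Div(p-1)$ and $d_{q}\in Div(q-1)$ with the displayed formulas defining integer $k+1>0$ and $\alpha_{2}>0$, a direct algebraic check gives $(k+1)p-\alpha_{2}=d_{p}$ and $(k+1)q-\alpha_{2}=d_{q}$, so both divisibilities of $(S_{10})$ reduce to the hypotheses $d_{p}\mid p-1$ and $d_{q}\mid q-1$. The only delicate point, and thus the mildest obstacle in the whole argument, is the bookkeeping on ranges: one must verify in the forward direction that the $d_{p}=p-u$ and $d_{q}=d_{p}+(k+1)s$ read off from the Euclidean data genuinely lie in the respective $Div$ sets (strict positivity plus integrality). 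Once the ranges $1\le d_{p}\le p-1$ and $d_{q}\ge d_{p}+s$ are recorded from $1\le u\le p-1$ and $k\ge 0$, no serious difficulty remains, and the iff is closed.
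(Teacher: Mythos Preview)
Your proposal is correct and follows essentially the same approach as the paper: substitute $\alpha_{2}=kp+u$, $q=p+s$, $\alpha_{1}^{''}=k+1$ into $(S_{10})$ to obtain $p-u\mid p-1$ and $(k+1)s+p-u\mid q-1$, then set $d_{p}=p-u$, $d_{q}=(k+1)s+p-u$ and solve. You are in fact a bit more careful than the paper about the converse direction and the positivity/range verification for $d_{p}$ and $d_{q}$, but the argument is otherwise identical.
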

\begin{proof}
Substituting $\alpha_{2}=kp+u $, $q=p+s$ and $\alpha_{1}=(k+1)pq$ in $(S_{10})$, we get

$$ \left\{\begin{array}{rrl}
   (k+1)s+p-u & \mid & q-1\\
  p-u & \mid & p-1\\
   \end{array}
  \right. $$

 Therefore,  $\alpha$ is a $K_{N}$-base if and only if  there exist  $d_{p}\in Div(p-1)$ and $d_{q}\in Div(q-1)$ such that
  $$ \left\{\begin{array}{rrl}
 (k+1)s+p-u  & =& d_{q}\\
   p-u & =& d_{p}\\
   \end{array}
  \right. $$
 This is equivalent to  $k+1=\dfrac{d_{q}-d_{p}}{s}>0$ and $\alpha_{2}=\dfrac{pd_{q}-qd_{p}}{s}>0$.

\end{proof}

\section{The Korselt Set of $pq$}

First, let us give the Korselt set of $pq$ over $\mathbb{Z}$ when $q<2p$.
\begin{corollary}\label{encadr31}
Let $\alpha\in\mathbb{Z}$  such that $\gcd(\alpha,N)=1$. Assume that  $q<2p$. Then, $\alpha$ is a $K_{N}$-base if and only if  there exist  $d_{q}\in Div(q-1)$ and $\varepsilon \in \{-1,1\}$ such that $\alpha= p+\varepsilon d_{q}$ and $ s-\varepsilon d_{q}$ divides $p-1$.
\end{corollary}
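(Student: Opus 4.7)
The plan is to package the three Lemmas~\ref{encadr6}, \ref{encadr7}, \ref{encadr8} into a single uniform statement. Since $\alpha\in\mathbb{Z}$ corresponds to $\alpha_{2}=1$, Proposition~\ref{encadr5} forces $1\in\{j-1,j,j+1\}$, that is, $j\in\{0,1,2\}$, and each value of $j$ is handled by exactly one of the three lemmas.

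Forward direction: I dispatch on $j$. If $j=1$, Lemma~\ref{encadr7} with $\alpha_{2}=1$ gives $t=d_{q}$ for some $d_{q}\in Div(q-1)$ together with $d_{q}+\varepsilon' d_{p}=s$ for some $\varepsilon'\in\{-1,1\}$ and $d_{p}\in Div(p-1)$; taking $\varepsilon=1$ yields $\alpha=\alpha_{1}=p+d_{q}$ and $s-\varepsilon d_{q}=\varepsilon' d_{p}$, which divides $p-1$. If $j=0$, Lemma~\ref{encadr8} with $\alpha_{2}=1$ produces $t=p-d_{q}$ and $d_{p}-d_{q}=s$; taking $\varepsilon=-1$ gives $\alpha=p-d_{q}$ and $s-\varepsilon d_{q}=s+d_{q}=d_{p}$, again a divisor of $p-1$. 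Finally, if $j=2$, Lemma~\ref{encadr6} with $\alpha_{2}=1$ forces $\varepsilon' d_{p}=p-1$, hence $d_{p}=p-1$ and $t=s-1$, so $\alpha=2p+s-1=p+(q-1)$; this matches the statement's form with $d_{q}=q-1\in Div(q-1)$ and $\varepsilon=1$, since $s-\varepsilon d_{q}=-(p-1)$ divides $p-1$.

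Backward direction: no case analysis is required. Because $\gcd(\alpha,N)=1$, the reduction leading to system $(S_{2})$ in the proof of Proposition~\ref{encadr3} applies, and the Korselt criterion for $\alpha_{2}=1$ becomes simply $(p-\alpha)\mid(q-1)$ and $(q-\alpha)\mid(p-1)$. Given $\alpha=p+\varepsilon d_{q}$, one reads off $p-\alpha=-\varepsilon d_{q}$, a divisor of $q-1$ since $d_{q}\in Div(q-1)$, and $q-\alpha=s-\varepsilon d_{q}$, a divisor of $p-1$ by hypothesis.

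The only delicate point, rather than a genuine obstacle, is recognising that the $j=2$ case---which at first appears distinct because Lemma~\ref{encadr6} carries parity constraints on $d_{p}$ and a condition $t=s-1$---collapses to the single extremal pair $(d_{q},\varepsilon)=(q-1,1)$ in the uniform statement; once this observation is made, the three subcases glue cleanly into the single equivalence claimed, and the two directions become symmetric in their level of effort.
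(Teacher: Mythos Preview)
Your proof is correct and follows essentially the same approach as the paper: both use Proposition~\ref{encadr5} (implicitly, in the paper's case) together with Lemmas~\ref{encadr6}, \ref{encadr7}, \ref{encadr8} specialised to $\alpha_{2}=1$ to obtain the three subcases $\alpha=q+p-1$, $\alpha=p+d_{q}$, $\alpha=p-d_{q}$, and then repackage them into the single formula $\alpha=p+\varepsilon d_{q}$. The only minor variation is that for the backward direction you verify the reduced system $(S_{2})$ directly, whereas the paper leans on the biconditional in the three lemmas; your route is arguably cleaner there.
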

\begin{proof}

Let $\alpha\in\mathbb{Z}$ (i.e. $\alpha_{2}=1$) such that $\gcd(\alpha,N)=1$.
Suppose that  $q<2p$.  Then,  we have  by Lemmas~\ref{encadr6}, ~\ref{encadr7} and ~\ref{encadr8}, respectively, $\alpha$ is a $K_{N}$-base if and only if

\begin{itemize}
  \item $\alpha=q+p-1$,
  \item $\alpha=p+d_{q}$ such that $(s-d_{q})\mid (p-1)$, $d_{q}\leq p-1$.
  \item $\alpha=p-d_{q}>0$ such that $(s+d_{q})\mid (p-1)$.
\end{itemize}
This is equivalent to  the existence of  $d_{q}\in Div(q-1)$ and $\varepsilon\in \{-1,1\}$ such that $\alpha=p+\varepsilon d_{q}>0$ and $ (s-\varepsilon d_{q})\mid(p-1)$.

\end{proof}
By Corollary ~\ref{encadr31}, we derive immediately the following.

\begin{theorem}\label{structure1}Let $p< q$ be two prime numbers,
$N=pq$ and $q=p+s$ such that $1\leq s\leq p-1$. Then

$$\mathbb{Z}\text{-}\mathcal{KS}(N)= \left( \bigcup\limits_{\substack{d_{q}\mid q-1\\
                                                            \varepsilon\in\{-1,1\}}}
         \left\{p+\varepsilon d_{q} \, ; \, s-\varepsilon d_{q}\mid p-1 \right\}\right)\cup\{2p \, ; \; p-s\mid p-1\}.$$

\end{theorem}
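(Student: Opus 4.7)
The plan is to split $\mathbb{Z}\text{-}\mathcal{KS}(N)$ according to the value of $\gcd(\alpha, N)$. Since $\alpha \in \mathbb{Z}$ and $N = pq$, the gcd lies in $\{1,p,q,pq\}$. By Proposition~\ref{kornum}(1), whenever $\gcd(\alpha,N)\ne 1$ one has $q\nmid\alpha$ and $p\mid\alpha$, which rules out the gcd values $q$ and $pq$ immediately, leaving only $\gcd(\alpha,N)=p$. Since $q<2p$, the same proposition gives $\alpha\in\{\lfloor q/p\rfloor p,\lceil q/p\rceil p\}=\{p,2p\}$. So the whole problem reduces to two clean subcases.

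For the case $\gcd(\alpha,N)=1$ I would simply invoke Corollary~\ref{encadr31}: it already characterises such bases as $\alpha=p+\varepsilon d_q$ with $d_q\mid q-1$, $\varepsilon\in\{-1,1\}$, and $s-\varepsilon d_q\mid p-1$. This is precisely the first union in the statement, and the condition $s-\varepsilon d_q\mid p-1$ automatically rejects the degenerate values $\alpha=0$ and $\alpha=q$ (which would force the left-hand side to be $0$ or $\pm(p-1-\ldots)$ not dividing $p-1$).

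For the case $\gcd(\alpha,N)=p$ I would dispose of $\alpha=p$ at once, because the Korselt condition requires $p-\alpha\mid N-\alpha$ for the prime divisor $p$ of $N$, giving the forbidden $0\mid pq-p$. It then remains to analyse $\alpha=2p$. The divisibility at $p$ holds automatically, and at $q$ it becomes $q-2p\mid pq-2p$, i.e.\ $p-s\mid p(p+s-2)$. Using $0<p-s<p$ so that $\gcd(p-s,p)=1$, this collapses to $p-s\mid p+s-2$.

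The only subtle point, and the step I expect to need care, is reconciling $p-s\mid p+s-2$ with the stated condition $p-s\mid p-1$. Writing $p+s-2=2(p-1)-(p-s)$ turns the divisibility into $p-s\mid 2(p-1)$. To drop the factor $2$ I would use that in our range both $p$ and $q=p+s$ are odd primes, so $s$ is even and $p-s$ is odd (the only exception being $p=2$, $q=3$, where $p-s=1$ trivially divides $p-1=1$); hence $\gcd(p-s,2)=1$ and the condition is equivalent to $p-s\mid p-1$. Assembling both cases yields exactly the claimed union, completing the proof.
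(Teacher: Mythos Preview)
Your proof is correct and follows the paper's approach: the paper simply states that the theorem is immediate from Corollary~\ref{encadr31}, leaving the non-coprime case implicit, while you make the full argument explicit by invoking Proposition~\ref{kornum}(1) to reduce the $\gcd(\alpha,N)\neq 1$ case to $\alpha\in\{p,2p\}$ and then analysing $\alpha=2p$ directly. Your parity argument showing $p-s\mid p+s-2 \Leftrightarrow p-s\mid p-1$ is a detail the paper omits entirely, so your write-up is in fact more complete than the paper's one-line derivation.
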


Now, before giving the mean result in this paper, we set for  given   distinct prime numbers $p$ and $q$ the following:

 $$\mathcal{A}_{p,q}=  \bigcup\limits_{\substack{d_{q}\mid q-1,\, d_{p}\mid p-1\\
                                                          \varepsilon\in\{-1,1\}}}
         \left\{\dfrac{qd_{q}+\varepsilon pd_{p}}{d_{q}+\varepsilon d_{p}} \, ; \, (q-p) \mid (d_{q}+\varepsilon d_{p}),\, d_{q}+\varepsilon d_{p}\neq 0\right\}$$

 $$\mathcal{B}_{p,q}=  \bigcup\limits_{\substack{d_{q}\mid q-1,\, d_{p}\mid p-1\\
                                                          \varepsilon\in\{-1,1\}}}
         \left\{\dfrac{(pd_{p}-\varepsilon d_{q})q}{qd_{p}- \varepsilon d_{q}} \, ; \,  (q-p) \mid (qd_{p}- \varepsilon d_{q}),\, q-p <pd_{p}- \varepsilon d_{q} \right\}$$

$$\mathcal{C}_{p,q}=\bigcup\limits_{\substack{d_{q}\mid q-1,\, d_{p}\mid p-1\\
                                                          \varepsilon\in\{-1,1\}}}
           \left\{\dfrac{(qd_{q}+\varepsilon d_{p})p}{pd_{q}+\varepsilon d_{p}} \, ; \,  (q-p) \mid (pd_{q}+\varepsilon d_{p})\right\}$$

$$\mathcal{D}_{p,q}=\bigcup\limits_{\substack{d_{q}\mid q-1,\, d_{p}\mid p-1\\
                                                          \varepsilon\in\{-1,1\}}}
           \left\{\dfrac{(d_{p}+ \varepsilon d_{q})pq}{qd_{p}+ \varepsilon pd_{q}} \, ; \, (q-p) \mid (d_{p}+ \varepsilon d_{q}), \, d_{p}+ \varepsilon d_{q}\neq0 \right\}$$

\begin{theorem}[Structure of the Rational Korselt Set of $pq$]\label{structure1}

Let $p< q$ be two prime numbers,
$N=pq$ and $q=ip+s$ such that $1\leq s\leq p-1$. Then the following properties hold.
\begin{itemize}
    \item[$(1)$] If $q>4p$, then \begin{equation*}\label{eq93}
    \mathbb{Q}\text{-}\mathcal{KS}(N)=\mathcal{B}_{p,q}\cup\mathcal{C}_{p,q}\cup\{p+q-1\}.\end{equation*}
    \item[$(2)$] Assume that $3p<q<4p$. Then the following assertions hold.
    \begin{enumerate}
      \item [(a)] If $q=4p-3$, then \begin{equation*}\label{eq94}
      \mathbb{Q}\text{-}\mathcal{KS}(N)=\mathcal{B}_{p,q}\cup \mathcal{C}_{p,q}\cup\left\{q-p+1, p+q-1, \dfrac{pq}{2p-1}\right\}.\end{equation*}
      \item [(b)]If $q\neq4p-3$, then \begin{equation*}\label{eq95}
      \mathbb{Q}\text{-}\mathcal{KS}(N)= \mathcal{B}_{p,q}\cup \mathcal{C}_{p,q}\cup\{ p+q-1\}.\end{equation*}
    \end{enumerate}

    \item[$(3)$] Suppose that  $2p<q<3p$. Then the following conditions are satisfied.
    \begin{enumerate}
    \item [(i)] If $s+1$ divides $q-1$.
     \begin{enumerate}
       \item [(a)]If $s=\dfrac{p-5}{4}$ or $s=p-2$, then  \begin{equation*}\label{eq96}
       \mathbb{Q}\text{-}\mathcal{KS}(N)=\mathcal{B}_{p,q}\cup \mathcal{C}_{p,q}\cup\left\{3q-5p+3,q-p+1,p+q-1,\dfrac{2p+q-1}{2},\dfrac{2pq}{3p-1}, \dfrac{pq}{2p-1}, \dfrac{2pq}{q+1}\right\}.\end{equation*}
       \item [(b)] If   $s\neq \dfrac{p-5}{4}$ and $s\neq p-2$ then
       \begin{equation*}\label{eq97}
       \mathbb{Q}\text{-}\mathcal{KS}(N)=\mathcal{B}_{p,q}\cup \mathcal{C}_{p,q}\cup \left\{q-p+1,p+q-1,\dfrac{2p+q-1}{2},\dfrac{pq}{2p-1}, \dfrac{2pq}{q+1}\right\}.\end{equation*}
       \end{enumerate}
       \item [(ii)] If $s+1$ not dividing $q-1$, then \begin{equation*}\label{eq98}
       \mathbb{Q}\text{-}\mathcal{KS}(N)=\mathcal{B}_{p,q}\cup \mathcal{C}_{p,q}\cup \left\{p+q-1\right\}.\end{equation*}
     \end{enumerate}

        \item[$(4)$] Assume that $q<2p$. Then the following assertions hold.
        \begin{enumerate}
      \item [(i)] If $q=5$, then $$\mathbb{Q}\text{-}\mathcal{KS}(N)= \mathcal{A}_{p,q}\cup \mathcal{B}_{p,q}\cup \mathcal{C}_{p,q}\cup \mathcal{D}_{p,q}\cup \left\{\dfrac{q}{2},\dfrac{q}{3}\right\}.$$
       \item [(ii)]Suppose that $q\neq5$. Then the following subcases hold.
      \begin{enumerate}
      \item [(a)]If $p>2$ and $s=\dfrac{p+1}{2}$, then

      $\mathbb{Q}\text{-}\mathcal{KS}(N)= \mathcal{A}_{p,q}\cup \mathcal{B}_{p,q}\cup \mathcal{C}_{p,q}\cup \mathcal{D}_{p,q}\cup \left\{\dfrac{q}{3}\right\}.$
      \item [(b)]If $s=p-1$, then

      $\mathbb{Q}\text{-}\mathcal{KS}(N)= \mathcal{A}_{p,q}\cup \mathcal{B}_{p,q}\cup \mathcal{C}_{p,q}\cup \mathcal{D}_{p,q}\cup \left\{\dfrac{(s-1)q}{q-2},\dfrac{q}{2}\right\}.$
      \item[(c)] Assume $p-s$  divides $2s-1$ and $s\neq p-1$. Then

      $\mathbb{Q}\text{-}\mathcal{KS}(N)= \mathcal{A}_{p,q}\cup \mathcal{B}_{p,q}\cup \mathcal{C}_{p,q}\cup \mathcal{D}_{p,q}\cup \left\{\dfrac{q}{2}\right\}.$
      \item[(d)] If $p-s$  divides $s-1$, $s\neq\dfrac{p+1}{2}$ and $s\neq p-1$, then

      $\mathbb{Q}\text{-}\mathcal{KS}(N)= \mathcal{A}_{p,q}\cup \mathcal{B}_{p,q}\cup \mathcal{C}_{p,q}\cup \mathcal{D}_{p,q}\cup \left\{\dfrac{(s-1)q}{q-2}\right\}.$
      \item[(e)] If $p-s$  divides neither $s-1$ nor $2s-1$, then

      $\mathbb{Q}\text{-}\mathcal{KS}(N)= \mathcal{A}_{p,q}\cup \mathcal{B}_{p,q}\cup \mathcal{C}_{p,q}\cup \mathcal{D}_{p,q}.$

       \end{enumerate}
      \end{enumerate}
\end{itemize}
\end{theorem}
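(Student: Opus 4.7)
The plan is to split $\mathbb{Q}\text{-}\mathcal{KS}(N)$ according to the value of $d=\gcd(\alpha_1,N)$. Since $p,q$ are prime and $\gcd(\alpha_1,\alpha_2)=1$, the only possibilities are $d\in\{1,p,q,pq\}$. For each value of $d$ I will collect all Korselt bases by applying the corresponding propositions and lemmas from Section~2, then re-express the union in terms of $\mathcal{A}_{p,q},\mathcal{B}_{p,q},\mathcal{C}_{p,q},\mathcal{D}_{p,q}$ together with the finite list of exceptional fractions that appear in the theorem. Throughout, the single base $p+q-1$ (the ``trivial'' integer Korselt base, verified directly from $(p-1)(q-1)=N-(p+q-1)$) must be separated, because it corresponds to a degenerate choice $d_p=p-1$, $d_q=q-1$ which is ruled out by one of the inequalities defining $\mathcal{A}_{p,q},\mathcal{B}_{p,q},\mathcal{C}_{p,q},\mathcal{D}_{p,q}$.

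For $d=1$, Proposition~\ref{encadr4} forces $\alpha\in\mathbb{Z}$ whenever $q>2p$, so in that regime the only contribution is the integer bases (and only $p+q-1$ survives the restriction coming from Proposition~\ref{encadr3}, since any other would force $p\mid\alpha_1$ or $q\mid\alpha_1$, a case covered elsewhere). When $q<2p$, Proposition~\ref{encadr5} gives $\alpha_2\in\{j-1,j,j+1\}$ and Lemmas~\ref{encadr6}, \ref{encadr7}, \ref{encadr8} describe each subcase; a direct computation (carried out by substituting $\alpha_1=\alpha_2p+t$ and using $q=p+s$) shows that every resulting $\alpha$ is of the form $(qd_q+\varepsilon p d_p)/(d_q+\varepsilon d_p)$ with $s\mid(d_q+\varepsilon d_p)$, i.e. lies in $\mathcal{A}_{p,q}$ (the $\alpha_2=j-1$ case is the specialisation $d_q=q-1$, and the $\alpha_2=j+1$ case is $\varepsilon=-1$ with $d_q<d_p$). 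For $d=q$, Proposition~\ref{encadr10}(2) gives the two ``small'' bases $q/2$ and $q/3$ (when the divisibility conditions on $s$ are met), Lemma~\ref{encadr11} gives the exceptional $(s-1)q/(q-2)$ when $m=2$ and contributes $\mathcal{B}_{p,q}$ when $m=1$, and Lemma~\ref{encadr12} gives $\mathcal{B}_{p,q}$ directly when $q>2p$. For $d=p$, Lemma~\ref{encadr14} gives precisely $\mathcal{C}_{p,q}$. For $d=pq$, Proposition~\ref{encadr15}(2) shows this case only occurs when $q\leq 4p-3$; Lemmas~\ref{encadr16}, \ref{encadr17} yield the discrete exceptional fractions ($3q-5p+3$, $(2p+q-1)/2$, $2pq/(q+1)$, $pq/(2p-1)$, $2pq/(3p-1)$, $q-p+1$) when $2p<q<4p$; while Lemmas~\ref{encadr19}, \ref{encadr30} together supply $\mathcal{D}_{p,q}$ when $q<2p$.

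Assembling the four cases of the theorem is then bookkeeping according to $i$. When $i\geq 4$ only $\mathcal{B}_{p,q}\cup\mathcal{C}_{p,q}\cup\{p+q-1\}$ survives, because Propositions~\ref{encadr4} and~\ref{encadr15} eliminate $\mathcal{A}_{p,q}$ and $\mathcal{D}_{p,q}$ respectively. When $i=3$ the only new contribution comes from Lemma~\ref{encadr17}, giving the extra fraction $pq/(2p-1)$ precisely when $q=4p-3$, together with $q-p+1$ (which for $q=4p-3$ equals $3p-2$, a Korselt integer base not appearing in $\mathcal{B}_{p,q}\cup\mathcal{C}_{p,q}$). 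When $i=2$, Lemma~\ref{encadr16} produces the three conditional fractions according to whether $s+1\mid q-1$, $\tfrac{s+1}{2}\mid p-1$, or $q\in\{3p-2,(9p-5)/4\}$ (the last condition recasts as $s=\tfrac{p-5}{4}$ or $s=p-2$). Finally for $i=1$ all four families $\mathcal{A},\mathcal{B},\mathcal{C},\mathcal{D}$ appear and the extra fractions come from Proposition~\ref{encadr10}(2) and Lemma~\ref{encadr11}(2), their presence being toggled by the three divisibility conditions ``$p-s\mid 2s-1$'', ``$p-s\mid s-1$'', and ``$s=(p+1)/2$'', which is exactly how the subcases (a)--(e) of part (4) are formulated.

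The main obstacle will be the verification step in the opposite direction: given an element of $\mathcal{A}_{p,q}\cup\mathcal{B}_{p,q}\cup\mathcal{C}_{p,q}\cup\mathcal{D}_{p,q}$, one must check that the reduced form $\alpha_1/\alpha_2$ actually satisfies $\alpha_2p-\alpha_1\mid N-\alpha$ and $\alpha_2q-\alpha_1\mid N-\alpha$, i.e.\ that the formulas derived in Section~2 are not only necessary but sufficient, and that reduction to lowest terms does not move a candidate from one ``$d$-case'' into another (which would otherwise cause double counting between, say, $\mathcal{A}_{p,q}$ and $\mathcal{C}_{p,q}$). A secondary obstacle is checking that the integer base $p+q-1$, together with its ``partner'' $q-p+1$ in the $q=4p-3$ case, is genuinely not already present in $\mathcal{A}_{p,q}\cup\mathcal{B}_{p,q}\cup\mathcal{C}_{p,q}\cup\mathcal{D}_{p,q}$; this reduces to checking that the inequalities $d_q+\varepsilon d_p\neq 0$, $q-p<pd_p-\varepsilon d_q$, etc., exclude precisely the degenerate parameter choices $(d_p,d_q)=(p-1,q-1)$ and $(d_p,d_q)=(1,1)$.
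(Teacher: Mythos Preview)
Your overall strategy --- partition $\mathbb{Q}\text{-}\mathcal{KS}(N)$ according to $\gcd(\alpha_1,N)\in\{1,p,q,pq\}$ and invoke the lemmas of Section~2 for each piece --- is exactly the paper's approach. The identifications $\mathcal{C}=\mathcal{C}_{p,q}$ via Lemma~\ref{encadr14}, $\mathcal{D}=\mathcal{D}_{p,q}$ via Lemmas~\ref{encadr19}--\ref{encadr30} (when $q<2p$), and the handling of $\mathcal{B}$ via Proposition~\ref{encadr10} and Lemmas~\ref{encadr11}--\ref{encadr12} are all correct and match the paper.

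There is, however, a genuine gap in your treatment of $\mathcal{A}$ when $q>2p$. You write that ``only $p+q-1$ survives the restriction coming from Proposition~\ref{encadr3}, since any other would force $p\mid\alpha_1$ or $q\mid\alpha_1$''. This is not what Proposition~\ref{encadr3} says: it only bounds $\alpha_2$, and when $\alpha_2=1$ it gives no information at all about which integers $\alpha$ are Korselt bases. In fact $\mathcal{A}$ is \emph{not} reduced to $\{p+q-1\}$ in general: in case $(3)$ with $s+1\mid q-1$ the set $\mathcal{A}$ contains $q-p+1$, $3q-5p+3$, and $(2p+q-1)/2$ as well. The paper obtains these by quoting results from the earlier paper~\cite{Ghanmi} (specifically Theorem~14 and Lemma~12 there), which classified $\mathbb{Z}\text{-}\mathcal{KS}(pq)$ for $q>2p$. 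Your sketch does not supply this input, and Proposition~\ref{encadr3} cannot replace it.

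This gap also explains a misattribution later in your outline: you credit Lemmas~\ref{encadr16}--\ref{encadr17} with producing $3q-5p+3$, $q-p+1$, and $(2p+q-1)/2$. Those lemmas concern only the case $pq\mid\alpha_1$ and yield only the fractions $\dfrac{2pq}{3p-1}$, $\dfrac{pq}{2p-1}$, $\dfrac{2pq}{q+1}$ (resp.\ $\dfrac{pq}{2p-1}$ alone when $q=4p-3$). The remaining exceptional elements in parts~(2) and~(3) of the theorem all lie in $\mathcal{A}$ and come from the external reference~\cite{Ghanmi}, not from Section~2 of this paper.
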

\begin{proof}For $N=pq$, let the sets
 $$\mathcal{A}=  \left\{\alpha=\dfrac{\alpha_{1}}{\alpha_{2}}\in\mathbb{Q}\text{-}\mathcal{KS}(N) \, ; \,\gcd(\alpha_{1},N)=1 \right\},$$
 $$\mathcal{B}=  \left\{\alpha=\dfrac{\alpha_{1}}{\alpha_{2}}\in\mathbb{Q}\text{-}\mathcal{KS}(N) \, ; \,\gcd(\alpha_{1},N)=q \right\},$$
 $$\mathcal{C}=  \left\{\alpha=\dfrac{\alpha_{1}}{\alpha_{2}}\in\mathbb{Q}\text{-}\mathcal{KS}(N) \, ; \,\gcd(\alpha_{1},N)=p \right\},$$
 $$\mathcal{D}=  \left\{\alpha=\dfrac{\alpha_{1}}{\alpha_{2}}\in\mathbb{Q}\text{-}\mathcal{KS}(N) \, ; \,\gcd(\alpha_{1},N)=pq \right\}.$$
 Clearly, we have $$\mathbb{Q}\text{-}\mathcal{KS}(N)=\mathcal{A}\cup\mathcal{B}\cup\mathcal{C}\cup\mathcal{D}.$$

First, note that if $q>2p$, then  by Lemmas~\ref{encadr12} and ~\ref{encadr14} we have respectively $\mathcal{C}=\mathcal{C}_{p,q} $ and $\mathcal{B}=\mathcal{B}_{p,q}$. Also, by Proposition~\ref{encadr4} $$\mathcal{A}=\left\{\alpha\in\mathbb{Z}\text{-}\mathcal{KS}(N)\, ; \,\gcd(\alpha,N)=1 \right\}.$$
\begin{enumerate}
  \item [$(1)$] If $q>4p$, then  we have by ~\cite[Theorem $14$]{Ghanmi} and Proposition~\ref{encadr15}, respectively, $\mathcal{A}=\left\{q+p-1 \right\}$ and $\mathcal{D}=\emptyset$. Therefore, $$\mathbb{Q}\text{-}\mathcal{KS}(N)=\mathcal{B}_{p,q}\cup\mathcal{C}_{p,q}\cup\left\{q+p-1 \right\}.$$
  \item [$(2)$] Assume that $3p<q<4p$. Then by ~\cite[Theorem $14$]{Ghanmi} we consider the following two subcases.
  \begin{enumerate}
    \item [$(a)$] Suppose that $q=4p-3$. Then $\mathcal{A}=\left\{q-p+1,q+p-1 \right\}$.

   Since  $\mathcal{D}=\left\{\dfrac{pq}{2p-1} \right\}$  by Lemma ~\ref{encadr17}  it follows that  $$\mathbb{Q}\text{-}\mathcal{KS}(N)=\mathcal{B}_{p,q}\cup \mathcal{C}_{p,q}\cup\left\{q-p+1, p+q-1, \dfrac{pq}{2p-1}\right\}.$$
    \item [$(b)$] If $q\neq4p-3$, then $\mathcal{A}=\left\{q+p-1 \right\}$. Also, since $\mathcal{D}=\emptyset$ by Lemma ~\ref{encadr17} we deduce that $$\mathbb{Q}\text{-}\mathcal{KS}(N)=\mathcal{B}_{p,q}\cup \mathcal{C}_{p,q}\cup\left\{ p+q-1 \right\}.$$
  \end{enumerate}

  \item [$(3)$] Now, suppose that  $2p<q<3p$. Then by Lemma ~\ref{encadr16} several subcases are to be discussed.
  \begin{enumerate}
         \item[(i)] If $s+1$ divides $q-1$ (which is equivalent to $\frac{s+1}{2}$ divides $p-1$).
                \begin{enumerate}
         \item [(a)]If $s=\dfrac{p-5}{4}$ or $s=p-2$, then  $\mathcal{D}=\left\{\dfrac{2pq}{3p-1}, \dfrac{pq}{2p-1}, \dfrac{2pq}{q+1} \right\}$ and by ~\cite[Lemma $12$]{Ghanmi},

             $$\mathcal{A}=\left\{3q-5p+3,q-p+1,p+q-1,\dfrac{2p+q-1}{2}\right\}.$$   Thus, we conclude that
              $$\mathbb{Q}\text{-}\mathcal{KS}(N)=\mathcal{B}_{p,q}\cup \mathcal{C}_{p,q}\cup\left\{3q-5p+3,q-p+1,p+q-1,\dfrac{2p+q-1}{2},\dfrac{2pq}{3p-1}, \dfrac{pq}{2p-1}, \dfrac{2pq}{q+1}\right\}.$$
         \item[(b)]If   $s\neq\dfrac{p-5}{4}$ and $s\neq p-2$ , then $\mathcal{D}=\left\{\dfrac{pq}{2p-1},\dfrac{2pq}{q+1} \right\}.$
         Now, let $ \mathcal{S}=\left\{q-p+1,p+q-1,\dfrac{2p+q-1}{2}\right\}$. We will prove that $\mathcal{A}=\mathcal{S}$.

         Since  $s+1\mid q-1$, it follows  by definition that  $\mathcal{S}\subseteq \mathcal{A}$.
           Also, as $s\neq\dfrac{p-5}{4}$, it follows according to  ~\cite[Lemma $12$]{Ghanmi}, that
           $\mathcal{A}\subseteq\mathcal{S}$ and so $\mathcal{A}=\mathcal{S}.$
         Thus, $$\mathbb{Q}\text{-}\mathcal{KS}(N)=\mathcal{B}_{p,q}\cup \mathcal{C}_{p,q}\cup \left\{q-p+1,p+q-1,\dfrac{2p+q-1}{2},\dfrac{pq}{2p-1},\dfrac{2pq}{q+1}\right\}.$$
         \end{enumerate}
         \item [(ii)]Now, suppose that $s+1$ doesn't divide $q-1$. Then $\mathcal{D}=\emptyset$ and   by  ~\cite[Lemma $12$]{Ghanmi} we have $\mathcal{A}= \left\{ p+q-1\right\}$. So, $$\mathbb{Q}\text{-}\mathcal{KS}(N)=\mathcal{B}_{p,q}\cup \mathcal{C}_{p,q}\cup \left\{p+q-1\right\}.$$
       \end{enumerate}
  \item [$(4)$] Assume that $q<2p$. Then by Lemmas~\ref{encadr6}, ~\ref{encadr7} and ~\ref{encadr8} we have $\mathcal{A}=\mathcal{A}_{p,q}$.
  Further, $\mathcal{C}=\mathcal{C}_{p,q}$ by  Lemma ~\ref{encadr14} and by   lemmas ~\ref{encadr19} and ~\ref{encadr30} we have  $\mathcal{D}=\mathcal{D}_{p,q}$.

    Now, let us determine  the set $\mathcal{B}$. First,  we remark by Lemma ~\ref{encadr11} and Proposition~\ref{encadr10} that \begin{equation}\label{eq95}\mathcal{B}_{p,q}\subseteq\mathcal{B}\subseteq\mathcal{B}_{p,q}\cup\left\{\dfrac{(s-1)q}{q-2},\dfrac{q}{2},\dfrac{q}{3}\right\}.
  \end{equation}

  Also, we need to state the next four assertions which can be verified easily, in order to  determine completely $\mathcal{B}$.

     \begin{itemize}
       \item If $\dfrac{q}{3}\in\mathcal{B}$ then $\dfrac{(s-1)q}{q-2}\in\mathcal{B}$.
       \item $ \left\{\dfrac{(s-1)q}{q-2},\dfrac{q}{2}\right\}\subseteq \mathcal{B}$ if and only if $s=p-1$.
       \item $ \left\{\dfrac{q}{2},\dfrac{q}{3}\right\}\subseteq \mathcal{B}$ if and only if $q=5$.
       \item If $q=5$ then $\dfrac{(s-1)q}{q-2}=\dfrac{q}{3}$.
     \end{itemize}
     This leads us to consider the following cases.
      \begin{enumerate}
      \item [(i)] If $q=5$, then $\mathcal{B}=\mathcal{B}_{p,q}\cup\left\{\dfrac{q}{2},\dfrac{q}{3}\right\}.$
      Thus, $$\mathbb{Q}\text{-}\mathcal{KS}(N)= \mathcal{A}_{p,q}\cup \mathcal{B}_{p,q}\cup \mathcal{C}_{p,q}\cup \mathcal{D}_{p,q}\cup \left\{\dfrac{q}{2},\dfrac{q}{3}\right\}.$$
      \item [(ii)]Suppose that $q\neq5$. Then the following subcases hold.
      \begin{enumerate}
      \item [(a)]If $p>2$ and $s=\dfrac{p+1}{2}$, then $\dfrac{q}{3}\in\mathcal{B}$ and so $\dfrac{(s-1)q}{q-2}\in\mathcal{B}$.

       Since in addition $q\neq5$, we have $\dfrac{q}{2}\notin\mathcal{B}$. Therefore $\mathcal{B}=\mathcal{B}_{p,q}\cup\left\{\dfrac{q}{3}\right\}.$  This implies that

       $\mathbb{Q}\text{-}\mathcal{KS}(N)= \mathcal{A}_{p,q}\cup \mathcal{B}_{p,q}\cup \mathcal{C}_{p,q}\cup \mathcal{D}_{p,q}\cup \left\{\dfrac{q}{3}\right\}$.
        \item [(b)]If $s=p-1$, then   $\left\{\dfrac{(s-1)q}{q-2},\dfrac{q}{2}\right\}\subseteq \mathcal{B}.$

        Since in addition $q\neq5$, we have $\dfrac{q}{3}\notin\mathcal{B}$. Therefore $\mathcal{B}=\mathcal{B}_{p,q}\cup\left\{\dfrac{q}{2},\dfrac{(s-1)q}{q-2}\right\}.$ So, we deduce that

      $\mathbb{Q}\text{-}\mathcal{KS}(N)= \mathcal{A}_{p,q}\cup \mathcal{B}_{p,q}\cup \mathcal{C}_{p,q}\cup \mathcal{D}_{p,q}\cup \left\{\dfrac{(s-1)q}{q-2},\dfrac{q}{2}\right\}.$

      \item[(c)] Assume that $(p-s)\mid(2s-1)$ and $s\neq p-1$. Then by Proposition ~\ref{encadr10} we have $\dfrac{q}{2}\in \mathcal{B}$.
       Also, as $p-s$  not dividing $s-1$ (else $s=p-1$), it follows that $\dfrac{(s-1)q}{q-2}\notin \mathcal{B}$ and as $q\neq5$, we have $\dfrac{q}{3}\notin \mathcal{B}$.   Consequently, we get by  $\eqref{eq95}$, $\mathcal{B}=\mathcal{B}_{p,q}\cup\left\{\dfrac{q}{2}\right\}$. This implies that

     $\mathbb{Q}\text{-}\mathcal{KS}(N)= \mathcal{A}_{p,q}\cup \mathcal{B}_{p,q}\cup \mathcal{C}_{p,q}\cup \mathcal{D}_{p,q}\cup \left\{\dfrac{q}{2}\right\}.$
      \item[(d)] If $(p-s)\mid(s-1)$, $s\neq\dfrac{p+1}{2}$ and $s\neq p-1$, then we get respectively $\dfrac{(s-1)q}{q-2}\in \mathcal{B}$, $\dfrac{q}{3}\notin \mathcal{B}$ and $\dfrac{q}{2}\notin \mathcal{B}$. Hence, $\mathcal{B}=\mathcal{B}_{p,q}\cup\left\{\dfrac{(s-1)q}{q-2}\right\}$, therefore

       $\mathbb{Q}\text{-}\mathcal{KS}(N)= \mathcal{A}_{p,q}\cup \mathcal{B}_{p,q}\cup \mathcal{C}_{p,q}\cup \mathcal{D}_{p,q}\cup \left\{\dfrac{(s-1)q}{q-2}\right\}.$
      \item[(e)] Now, suppose that $p-s$  divides neither $s-1$ nor $2s-1$. Then by Lemma~\ref{encadr11} and Proposition~\ref{encadr10} each of the rational numbers $\dfrac{(s-1)q}{q-2},\dfrac{q}{2}$ and $\dfrac{q}{3}$ is  not in  $\mathcal{B}$. Hence,  $\mathcal{B}=\mathcal{B}_{p,q}$. So, we conclude that

           $\mathbb{Q}\text{-}\mathcal{KS}(N)= \mathcal{A}_{p,q}\cup \mathcal{B}_{p,q}\cup \mathcal{C}_{p,q}\cup \mathcal{D}_{p,q}.$
       \end{enumerate}
    \end{enumerate}
\end{enumerate}
\end{proof}

\noindent \textbf{Acknowledgement.} I thank the referee for his/her
report improving both the presentation and the mathematical content
of the paper.
\bigskip

\end{document}